\definecolor{linkcolor}{rgb}{0, 0, 1}
\definecolor{citecolor}{rgb}{0, 0, 1}
\definecolor{urlcolor}{rgb}{0, 0, 1}
   \newcommand{\f}{\varphi}
\newcommand{\ms}{\medskip}
\newcommand{\ssk}{\smallskip}
\newcommand{\q}{\quad}  \newcommand{\qq}{\qquad}
    \newcommand{\const}{\mbox{const}\,}
\newcommand{\R}{\mathbb{R}}   \newcommand{\wt}{\widetilde}
\newcommand{\p}{\psi}   \newcommand{\e}{\varepsilon}
\newcommand{\beq}{\begin{equation}\label}  \newcommand{\eeq}{\end{equation}}
\begin{document}

\title{On the Relation Between Two Approaches \\ to Necessary Optimality
	Conditions in Problems \\ with State Constraints}

\author[1,2]{Andrei Dmitruk\thanks{\href{mailto:dmitruk@member.ams.org}{dmitruk@member.ams.org}
	}}
\author[2]{Ivan Samylovskiy\thanks{\href{mailto:ivan.samylovskiy@cs.msu.ru}{ivan.samylovskiy@cs.msu.ru}
		}}

\affil[1]{Central Economics and Mathematics Institute of the 					  Russian Academy of Sciences}
\affil[2]{Lomonosov Moscow State University, Faculty of 						  Computational Mathematics and Cybernetics}

\renewcommand\Authands{ and }

\newtheorem{theorem}{Theorem}[section]
\newtheorem{prop}{Proposition}[section]
\newtheorem{rem}{Remark}[section]
\newtheorem{definition}{Definition}[section]
\newtheorem{Note}{Note}[section]
\newtheorem{lemma}{Lemma}[section]
\newtheorem{remark}{Remark}[section]
\newtheorem{acknowledgements}{Acknowledgements}[section]
\maketitle

\begin{abstract}
We consider a class of optimal control problems with a state constraint
and investigate a trajectory with a single boundary interval (subarc).
Following R.V.~Gamkrelidze, we differentiate the state constraint
along the boundary subarc, thus reducing the original problem to a
problem with mixed control-state constraints, and show that this way allows
one to obtain the full system of stationarity conditions in the form of
A.Ya. Dubovitskii and A.A. Milyutin, including the sign definiteness of
the measure (state constraint multiplier), i.e. the non-negativity of its
density and atoms at junction points.
The stationarity conditions are obtained by a two-stage variation approach,
proposed in this paper. At the first stage, we consider only those variations,
which do not affect the boundary interval, and obtain optimality conditions
in the form of Gamkrelidze. At the second stage, the variations are concentrated
on the boundary interval, thus making possible to specify the stationarity
conditions and obtain the sign of density and atoms of the measure.

\end{abstract}

\section{Introduction}
It is a well-known fact that optimality conditions in problems with state
constraints are difficult for application in view of a nonstandard character
of the state constraint multiplier. In their seminal work \cite{DM65},
A.Ya. Dubovitskii and A.A. Milyutin  suggested to take this multiplier
in the form of non-negative measure concentrated on the boundary set of
the optimal trajectory (see also later works \cite{Gir,IT}).
This corresponds to the functional meaning of the state constraint, but
then the adjoint equation contains a measure (more precisely, its
generalized derivative
\footnote{
	For a function $\mu(t)$ of bounded variation, its generalized derivative
	$\dot\mu(t)= d\mu(t)/dt$ is a generalized function in the sense that
	$\dot\mu(t)\,dt = d\mu(t)$ is the Riemann--Stieltjes measure generated by
	the function $\mu(t).$ If $\mu(t)$ is absolute continuous, then $\dot\mu(t)$
	is a usual Lebesgue integrable function;\, if $\mu(t)$ is discontinuous
	at a point $t_*\,,$ then $\dot\mu(t)$ contains the Dirac $\delta-$function
	at $t_*\,.$});
hence, one comes to a differential equation of a new, yet
uninvestigated type. Therefore, from the very beginning of studying such
problems, many specialists tried to avoid somehow this difficulty in
order to keep the adjoint equation as an ODE of convenient type.

If the boundary set of the trajectory is a segment, one can differentiate
the state constraint and reduce it to a mixed control-state constraint,
for which the stationarity conditions can be formulated with the usage
of standard objects. The result can be then represented in terms of the
original problem. This way was firstly suggested by R.V.Gamkrelidze in
the classical book \cite{Pont}, earlier than paper \cite{DM65}, but its
realization involves a nontrivial further step:\, one has to obtain
the non-negativity of the measure (the state constraint multiplier), including the
sign of the atoms of measure at junction points,  {which was not completely
	done in \cite{Pont}.}

Thus, for the problems with state constraints there are two forms of optimality
conditions (say, the maximum principle):\, the form of Gamkrelidze and the
form of Dubovitskii--Milyutin. A natural question is how these two forms
are connected. In paper \cite{HSV} and then in \cite{AKP}, it was shown,
by a simple change of the adjoint variable
\footnote{If $\p(t)$ is the adjoint variable in the Dubovitskii--Milyutin form,
	$\Phi(t,x(t))\le 0$ is the state constraint, and a monotone function $\mu(t)$
	generates the corresponding measure, then $\wt\p(t) = \p(t) -\mu(t)\,\Phi'_x(t, x^0(t))$
	is the adjoint variable in the Gamkrelidze form.},
that one can pass from the conditions in the Dubovitskii--Milyutin form
to the conditions in the form of Gamkrelidze, but the possibility of the
inverse passage was not investigated.

In this paper, we consider a special class of problems and reference
trajectories, in which the connection between the non-negativity of the
measure and the minimization of the cost is the most transparent.
In this class, one can completely fulfill Gamkrelidze's idea and prove the
non-negativity of the measure, thus showing that Gamkrelidze's approach allows
to obtain the conditions in Dubovitskii--Milyutin's form.
For simplicity, here we consider only necessary conditions of the so-called
{\it extended weak minimality} (i.e., stationarity conditions), leaving the
question about conditions of the strong minimality (the maximum principle)
for further investigations.
\bigskip

\section{Problem Statement}
On a fixed time interval, consider the following optimal control problem
with a state constraint:
\begin{equation}\label{problem_A}
	\mbox{{\textbf{Problem A:}}}\quad
	\begin{cases}
		\begin{aligned}
			\dot z  = f(z, x, u), & \q\; J_A  = J(z(0), z(T),\, x(0), x(T))\to\min, \\
			\dot x  = g(z, x, u), & \q\; \varphi_s\left(u(t)\right) \leq 0,
			\quad s=1,\dots, d(\varphi),\\
			\phantom{r}  x(t) \geq 0.
		\end{aligned}
	\end{cases}
\end{equation}
Here, $z \in \R^n$ and $x\in \R^1$ are state variables, $u\in \R^m$ is a control,
the functions $z(\cdot)$ and $x(\cdot)$ are absolute continuous, $u(\cdot)$ is
measurable and bounded. We will assume that the functions $f,\, g,\, \varphi$
of dimensions $n,\,1,\, \mbox{d}(\varphi),$ respectively, are defined and
continuous on an open subset $\mathcal{Q}\subset\R^{n + 1 +m}$ together
with their first-order partial derivatives w.r.t $z,x,u.$
{(The function $\varphi(u)$ can be formally considered as a
	function of variables $z,x,u$).} Note that the
state constraint is imposed only on the scalar state coordinate $x,$
so it has the simplest form $x\ge 0.$

\begin{definition}
	A triple of functions $w=(z,x,u)$ of the corresponding functional classes
	defined on $[0,T]$ and satisfying equations $\dot z = f(z,x,u),$ $\dot x = g(z,x,u)$
	is called a \textit{process} of problem A. A process is called \textit{admissible}
	if it satisfies all the constraints of the problem.
\end{definition}

\section{The Reference Trajectory}
Consider a reference process $w^0= (z^0,x^0,u^0)$  such that the
trajectory $x^0(t)$ touches the state boundary only on a segment $[t^0_1, t^0_2],$
where $0 < t^0_1 < t^0_2 < T.$  In other words, the interval $\Delta:= [0,T]$
is divided into parts $\Delta_1:= [0,t^0_1],$ $\Delta_2:= [t^0_1, t^0_2],$
and $\Delta_3:= [t^0_2, T]$ such that $x^0(t) > 0$ on $[0, t^0_1),$ $x^0(t) = 0$
on $\Delta_2,$ and $x^0(t) > 0$ on $(t^0_2,T].$ In addition, we suppose the control
$u^0$ to be continuous on $\Delta_1\,, \Delta_3$ and Lipschitz continuous on
$\Delta_2$ (for convenience, we assume that the function $u^0$ at time
moments $t^0_1,\, t^0_2$  has both left and right values),
moreover, $\varphi_s(u^0(t))<0$ on $\Delta_2$ for all $s,$ and the following
strict inequalities hold at the moments $t^0_1,\, t^0_2$:
\begin{equation}\label{x12}
	\begin{aligned}
		\dot x^0(t^0_1 - 0)\, &=\; g\left(z^0(t^0_1),\, x^0(t^0_1),\, u^0(t^0_1 - 0) \right) < 0,\\[2pt]
		\dot x^0(t_2^0 + 0)\, &=\; g\left(z^0(t^0_2),\, x^0(t^0_2),\, u^0(t^0_2 + 0) \right) > 0,
	\end{aligned}
\end{equation}
which mean that the landing to the state boundary and the leaving it
occurs with nonzero time derivatives.
We also suppose that $g'_u(z^0(t), x^0(t), u^0(t))\ne 0$ on the boundary arc
$\Delta_2\,,$  {i.e., that the state constraint is of order 1,}
and the gradients $\varphi'_s(u^0(t)),\;\, s \in I(u^0(t)),$ are
positive independent for all $t\in \Delta_1 \cup\Delta_3\,$  {(i.e., their
	nontrivial linear combination with non-negative coefficients cannot vanish)}.
Here $I(u) =\lbrace s\,:\; \varphi_s(u) = 0 \rbrace$
is the set of active indices. \smallskip

For short, we will write the control constraints in the vector form $\varphi(u)\le 0.$
\smallskip

Throughout this paper, we assume that the above assumptions are
satisfied for problem A.

Note that these assumptions are not easily verifiable a priori;\,
however, they are often satisfied in typical real problems.
As any other a priori assumptions, they can be considered, together with
necessary conditions of optimality, as a united collection of conditions
for the search of optimal trajectories.
In the book \cite{Pont}, a less restrictive assumption on the reference
trajectory $x^0(t)$ is imposed:\, it may touch the state boundary not on
one segment, but on a finite number of segments. The reference control
$u^0(t)$ is not assumed in \cite{Pont} to lie in the interior of the set
$\varphi(u)\le 0$ on $\Delta_2\,;$ instead, it is assumed that the gradient
$g'_u(z^0(t), x^0(t), u^0(t))$ together with the active gradients
$\varphi'_s(u^0(t))$ are linearly independent on $\Delta_2\,.$
We do not consider here these more complicated cases in order to avoid
more cumbersome technicalities, which would distract the reader's attention
from the main line of argumentation.

\section{The Type of Minimum}

We admit not only uniformly small variations of the control, but also
small variations of its discontinuity points. This corresponds to
consideration of the ``extended'' weak minimality. Recall its definition
(see, e.g. \cite{DO}) for a problem of type A.

\begin{definition}
	An admissible process $w^0(t) = (z^0(t), x^0(t), u^0(t))$ provides the
	{\it{extended weak minimumality}} in problem A\, if there exists an $\varepsilon > 0$
	such that, for any Lipschitz continuous surjective mapping
	$\sigma:[0, T]\to [0, T]\,$ satisfying $|\sigma(t) - t| < \varepsilon$
	and $|\dot{\sigma}(t) - 1| < \varepsilon,$ and for any admissible process
	$w(t) = (z(t), x(t), u(t))$ satisfying the conditions
	\begin{equation} \label{exwmin}
		\begin{aligned}
			|z\left(t\right) - z^0\left(\sigma(t)\right)| < \varepsilon,\quad\;
			|x\left(t\right) - x^0\left(\sigma(t)\right)| &< \varepsilon\quad\;\mbox{ for all }t,\\
			|u(t) - u^0\left(\sigma(t)\right)| &< \varepsilon\quad\;\mbox{ for almost all }t,
		\end{aligned}
	\end{equation}
	one has $J(w) \ge J(w^0).$
	
\end{definition}

{The conditions on $\sigma$ imply $\sigma(0)= 0$ and $\sigma(T)= T.$
	If we take $\sigma(t) =t,$ then relations (\ref{exwmin}) describe the usual
	uniform closeness between the processes $w^0$ and $w$ both in the state
	and control variables. However, for an arbitrary $\sigma(t),$ relations (\ref{exwmin})
	extend the set of "competing"\, processes, and thus the extended weak minimality
	is stronger than the classical weak minimality. The choice of arbitrary
	$\sigma(t)$ close to $\hat\sigma(t)=t$ corresponds to a variation (deformation)
	of the current time within the interval $[0,T]$ in addition to the usual
	uniformly small variations of $z(t),\,x(t)$ and $u(t)$ for the fixed values
	of $t.$}

{If the control $u^0(t)$ is continuous, the notion of extended weak
	minimality reduces to the usual notion of weak minimality. However, in the
	case of discontinuous $u^0(t),$ the usual small variations of the control
	(corresponding to the weak minimality) leave the points of discontinuity
	of $u^0(t)$ invariable, whereas the extended weak minimality allows for
	small variations of them.}

\section{Passage from Problem A to a Problem with Mixed Control-State Constraints}

Following \cite{DK}, we introduce a new time  {variable} $\tau \in [0,1]$
and consider the initial time variable $t$ on each segment $\Delta_i$
as a new state variable $t_i(\tau)$ subject to equation 
$\dfrac{d t_i}{d \tau} = \rho_i(\tau),$
where the functions $\rho_i(\tau) > 0,$ $i=1,2,3$ are additional controls.

On the segment $[0,1],$ introduce the state variables $r_i(\tau) = z(t_i(\tau)),$
$y_i(\tau) = x(t_i(\tau)),$ and the controls $v_i(\tau) = u(t_i(\tau)).$
Hence, the following equations are satisfied:
$$
\frac{d r_i}{d \tau}  = \rho_i(\tau)\, f(r_i, y_i, v_i),\qquad
\frac{d y_i}{d \tau}  = \rho_i(\tau)\, g(r_i, y_i, v_i), \qquad i=1,2,3.
$$
Thus, we ``replicate'' the variables of the original problem by taking
their reductions to the intervals $\Delta_i$ and considering all of
these reductions as new variables of the new time
\footnote{This natural trick of replication of variables was first
	proposed, probably, in \cite{Den}, and later was also used, may be independently,
	by many authors, e.g. in \cite{Vol-Ost,AgMa,MBKK,ObRo,DK,DK2,LTJW}.}.
In terms of these new variables, we now formulate a new problem
related to our problem A.

Since the original state variables $z,\,x$ are continuous at times $t_1,\,t_2$
(close to $t^0_1,\,t^0_2),$ the new state variables should satisfy the
junction conditions
\beq{ryt01}
\begin{array}{c}
	r_1(1) - r_2(0) =0, \qquad y_1(1) - y_2(0) =0,\qquad t_1(1) - t_2(0) =0, \\[4pt]
	r_2(1) - r_3(0) =0, \qquad y_2(1) - y_3(0) =0,\qquad t_2(1) - t_3(0) =0.
\end{array}
\eeq
Moreover, since the time interval $[0,T]$ is fixed, the variables $t_i$
should satisfy the boundary conditions $\;t_1(0) =0$ and $t_3(1) -T =0.$

Instead of state constraint $y_2(\tau) \geq 0$ on $[0,1],$ we will consider
the following pair of an endpoint and a mixed control-state constraints:
\begin{equation}\label{new-mix}
	y_2(0) \geq 0,\qquad \frac{d y_2}{d \tau} \equiv 0, \quad
	\mbox{ i.e., } \quad g(r_2, y_2, v_2) \equiv 0,
\end{equation}
while the control constraints will be now written in the form
$$
\varphi(v_i(\tau)) \leq 0, \qquad \rho_i >0, \qquad i=1,2,3.
$$

In the new problem, we will consider the ``classical'' weak minimality.
Therefore, we do not need to consider the open constraints $\rho_i >0$
as well as the constraint $\varphi(v_2(\tau)) \leq 0,$ since under our
assumptions the control $v_2^0(\tau)$ lies strictly in its interior.

Thus, we come to the following optimal control problem on the time
interval $\tau \in [0,1]:$
\begin{equation}\label{probl_2_1}
	J_B\; := J\left(r_1(0),\, r_3(1),\, y_1(0),\,  y_3(1)\right)\; \to\min,
\end{equation}
under the following constraints:
\begin{equation}\label{probl_2_2_1}
	\begin{cases}
		\begin{aligned}
			\frac{d r_1}{d \tau} & = \rho_1 f(r_1, y_1, v_1),\quad \; & \q r_1(1) - r_2(0) &= 0,\\
			\frac{d y_1}{d \tau} & = \rho_1 g(r_1, y_1, v_1),\quad \; &\q y_1(1) - y_2(0) &= 0,\\
			\frac{d t_1}{d \tau} & = \rho_1,\qquad  t_1(0) = 0,\; &\q t_1(1) - t_2(0) &= 0,\\
		\end{aligned}
	\end{cases}
\end{equation}

\begin{equation}\label{probl_2_2_2}
	\begin{cases}
		\begin{aligned}
			\frac{d r_2}{d \tau} & = \rho_2 f(r_2, y_2, v_2),\quad &r_2(1) - r_3(0) &= 0,\\
			\frac{d y_2}{d \tau} & = \rho_2 g(r_2, y_2, v_2),\quad &y_2(1) - y_3(0) &= 0,
			\quad & y_2(0) \geq 0,\\
			\frac{d t_2}{d \tau} & = \rho_2,\quad &t_2(1) - t_3(0) &= 0,\\
		\end{aligned}
	\end{cases}
\end{equation}

\begin{equation}\label{probl_2_2_3}
	\begin{cases}
		\begin{aligned}
			\frac{d r_3}{d \tau} & = \rho_3 f(r_3, y_3, v_3),\\
			\frac{d y_3}{d \tau} & = \rho_3 g(r_2, y_2, v_2),\quad \\
			\frac{d t_3}{d \tau} & = \rho_3,\quad &t_3(1) - T = 0,\\
		\end{aligned}
	\end{cases}
\end{equation}

\begin{equation}\label{probl_2_3}
	g(r_2, y_2, v_2) \equiv 0, \qquad \varphi(v_1(\tau)) \leq 0,\qquad
	\varphi(v_3(\tau)) \leq 0.
\end{equation}

This problem will be called {\it{problem B}}.\, Here, $\rho_i, v_i$ are the
controls and $r_i,\, y_i,\, t_i$ the state variables, $i=1,2,3.$
Note that constraints \eqref{new-mix} (included in \eqref{probl_2_2_2} and
\eqref{probl_2_3}) define a smaller class of admissible trajectories than
the state constraint $y_2(\tau) \geq 0$ does, so the new problem is not
equivalent to the initial problem A.\, Later, in Sec. 8, we will also take
into account nonconstant variations of $y_2(\tau),$ i.e., of $x(t)$ on the
boundary interval.\, On the other hand, the new problem does not involve
the state constraints $y_1 \ge 0$ and $y_3 \ge 0,$ so it allows for
a bigger class of admissible trajectories.
\ssk

It is easy to see that, to each admissible process $w = (z,x,u)$ of problem A
with $x(t) = \const$ on an interval $[t_1, t_2],$ one can associate
a (not unique)  admissible process $\gamma = (r_i,y_i,t_i, \rho_i, v_i)$
of problem B  (by choosing, e.g. $\rho_i(\tau) \equiv |\Delta_i|$), and
to each  {admissible} process of problem B one can associate,
{simply by setting $\tau =\tau(t),$} a unique admissible process
of problem A\,  {with $x(t) = \const$ on $[t_1, t_2].$}
\ssk

Let us establish a relation between the extended weak minimality
in problem A and the ``classical'' weak minimality in problem B.

\begin{lemma}
	Let the process $w^0 = (z^0(t), x^0(t), u^0(t))$ with the boundary arc
	$[t_1^0, t_2^0]$ provide the extended weak minimality in problem A.\,
	Then the corresponding process
	$\gamma^0 = (r^0_i(\tau),\, y^0_i(\tau),\, t^0_i(\tau),\, \rho^0_i(\tau),\, v^0_i(\tau),$
	$i=1,2,3)$ provides the weak minimality in problem B.
\end{lemma}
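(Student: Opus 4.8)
The plan is to argue by contraposition: assuming $\gamma^0$ does \emph{not} provide the weak minimum in problem B, we construct, from a competing process $\gamma$ that is uniformly close to $\gamma^0$ and gives a strictly smaller value of $J_B$, a competing process $w$ for problem A together with a suitable reparametrization $\sigma$ that witnesses the failure of the extended weak minimum. Concretely, given an admissible $\gamma = (r_i, y_i, t_i, \rho_i, v_i)$ with $\|\gamma - \gamma^0\|_\infty$ small and $J_B(\gamma) < J_B(\gamma^0) = J_A(w^0)$, I first note that since $\rho_i^0 > 0$ and $\rho_i$ is uniformly close to it, each $t_i(\tau)$ is strictly increasing, hence invertible; concatenating the three inverses $\tau = \tau(t)$ over the subintervals (they match at the junctions by the conditions $t_1(1)=t_2(0)$, etc., and run from $t=0$ to $t=T$ by $t_1(0)=0$, $t_3(1)=T$) yields a Lipschitz bijection of $[0,T]$. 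Setting $z(t) = r_i(\tau(t))$, $x(t) = y_i(\tau(t))$, $u(t) = v_i(\tau(t))$ on the corresponding $t$-interval gives, by the chain rule and the ODEs of problem B, an admissible process of problem A with $x(t) \equiv y_2(0) \ge 0$ constant on the middle interval, and $J_A(w) = J_B(\gamma) < J_A(w^0)$.

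Next I must produce the reparametrization $\sigma$ certifying that $w$ lies in the extended-weak neighborhood of $w^0$. The natural choice is $\sigma(t) = t^0(\tau(t))$, where $t^0(\tau)$ is the piecewise-defined map built from $\gamma^0$ in the same way; equivalently $\sigma = t^0 \circ (t)^{-1}$ as maps of $[0,T]$. This $\sigma$ is Lipschitz, surjective, sends $0 \mapsto 0$ and $T \mapsto T$, and — because $\rho_i$ is uniformly $\e$-close to $\rho_i^0$ with both bounded away from $0$ and $\infty$ — one checks that $|\sigma(t) - t|$ and $|\dot\sigma(t) - 1|$ are small, of order the closeness of $\gamma$ to $\gamma^0$. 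With this $\sigma$, the inequalities $|z(t) - z^0(\sigma(t))| < \e$ etc. follow directly: $z(t) - z^0(\sigma(t)) = r_i(\tau(t)) - r_i^0(\tau(t))$, which is controlled by $\|r_i - r_i^0\|_\infty$, and similarly for $x$ and $u$. Hence $w$ and $\sigma$ violate the extended weak minimality of $w^0$, a contradiction.

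The main obstacle — and the step that needs care rather than routine computation — is the \emph{quantitative} matching of the two $\e$'s: given the $\e_A$ from the definition of extended weak minimality in problem A, I need to choose the radius $\e_B$ of the neighborhood in problem B small enough that (i) the constructed $\sigma$ satisfies $|\sigma - t| < \e_A$ and $|\dot\sigma - 1| < \e_A$, and (ii) the state/control closeness inequalities hold with constant $\e_A$. Point (ii) is essentially immediate since the $t$-variables of $w$ and $w^0$ are evaluated at the \emph{same} $\tau(t)$; point (i) requires estimating $\dot\sigma(t) = \rho_i^0(\tau(t)) / \rho_i(\tau(t))$ and $\sigma(t) - t$ in terms of $\sup_i \|\rho_i - \rho_i^0\|_\infty$, using that $t^0_i$ and $t_i$ are both antiderivatives of the respective $\rho$'s with matching left endpoints on each $\Delta_i$. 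A secondary technical point is to confirm that $\tau(\cdot)$, as a concatenation, is genuinely Lipschitz on all of $[0,T]$ (the one-sided derivatives at the junction points are bounded because the $\rho_i$ are bounded below), so that $w = (z,x,u)$ has the required regularity — $z,x$ absolutely continuous, $u$ measurable and bounded — to be an admissible process of problem A. Once these estimates are in place, the contradiction closes the argument.
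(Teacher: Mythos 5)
There is a genuine gap, and it is precisely at the step you pass over in one clause: the claim that the process $w=(z,x,u)$ obtained from a competing $\gamma$ of problem B is \emph{admissible} in problem A. Problem B does not contain the constraints $y_1\ge 0$ and $y_3\ge 0$ (the paper points this out explicitly: B allows a strictly bigger class of trajectories off the middle subarc). So an admissible $\gamma$ close to $\gamma^0$ with $J_B(\gamma)<J_B(\gamma^0)$ may perfectly well have $y_1(\tau)<0$ or $y_3(\tau)<0$ somewhere; the corresponding $x(t)$ then violates $x\ge 0$ on $[0,t_1)\cup(t_2,T]$, the process $w$ is not admissible in problem A, and no contradiction with the extended weak minimality of $w^0$ arises. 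Your proposal never verifies $x(t)\ge 0$ outside the middle interval and, tellingly, never invokes assumption \eqref{x12} — which is exactly the hypothesis that makes this verification possible and is the real content of the lemma's proof.

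The missing argument runs as follows (this is what the paper does). By \eqref{x12}, $g(r_1^0,y_1^0,v_1^0)\le -c<0$ on some $[\theta,1]$; hence for $\gamma$ uniformly close to $\gamma^0$ one has $dy_1/d\tau\le -\rho_1 c/2<0$ there, and combining this with $y_1(1)=y_2(0)\ge 0$ forces $y_1(\tau)>0$ on $[\theta,1)$. On $[0,\theta]$ the reference $y_1^0$ is bounded below by some $b>0$, so uniform closeness gives $y_1\ge b/2>0$. Together, $y_1>0$ on $[0,1)$, and symmetrically $y_3>0$ on $(0,1]$ using the second inequality of \eqref{x12}. Only after this does one get admissibility of $w$ in problem A and the contradiction. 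Your treatment of the reparametrization $\sigma=t^0\circ t^{-1}$ and the quantitative matching of the two $\varepsilon$'s is sound and in fact more explicit than the paper's (which compresses it to ``$t_1\to t_1^0$, $t_2\to t_2^0$''), but it addresses a secondary technicality while the essential step is absent.
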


\begin{proof}
	Suppose that the process $\gamma^0$ does not provide the weak minimality in problem B.
	Then, there exists a sequence of uniformly convergent processes
	$\gamma \rightrightarrows \gamma^0$ of  problem B, such that
	$J_B(\gamma) < J_B(\gamma^0).$ According to \eqref{x12}, there exist such
	$\theta < 1$ and $c > 0$  that
	$$
	g\left(r_1^0(\tau), y_1^0(\tau), v_1^0(\tau) \right) \leq - c < 0\;
	\mbox{ on }\;[\theta, 1].
	$$
	Then, for sufficiently far members of the sequence, we get
	$$
	\frac{d y_1}{d\tau} = \rho_1(\tau) g\left(r_1(\tau), y_1(\tau), v_1(\tau)\right) \leq\,
	- \rho_1(\tau) \frac{c}{2}\, < 0\quad\; \mbox{ on }\;[\theta,1].
	$$
	From here with account of $y_1(1) \geq 0$, we get $y_1(\tau) > 0$ on
	$[\theta, 1).$ Consider the segment $[0, \theta].$ Here $y_1^0(\tau) > 0,$
	hence $y_1^0(\tau) \geq b$ for some $b > 0.$ Therefore, $y_1(\tau) \geq b / 2 > 0$
	for sufficiently far members of the sequence. Thus, $y_1(\tau) > 0$ on the
	whole semi-open interval $[0,1).$ Similarly, one    can prove that $y_3(\tau) > 0$
	on the whole semi-open interval $(0,1].$ The inequality $y_2(\tau) \geq 0$
	obviously holds on $[0,1],$ since $y_2 = \const$ and $y_2(0) \geq 0.$
	
	Thus, for the corresponding processes $w = (z,x,u)$ of problem A, we get
	$$
	x(t) > 0\;\mbox{ on }\;[0, t_1)\, \cup\, (t_2, T] \quad \mbox{ and}
	\quad x(t) \geq 0 \; \mbox{ on }\; [t_1,t_2],
	$$
	where $\, t_1 \to t_1^0,\;\; t_2 \to t_2^0\,.$
	The constraints $\varphi(u(t)) \leq 0$ are satisfied in view of
	inequalities $\varphi(v_i(\tau)) \leq 0,$ $i=1,2,3.$
	
	So, the prelimiting processes $w$ are admissible in problem A with the cost
	$J_A(w) = J_B(\gamma) < J_B(\gamma^0) = J_A(w^0),$ a contradiction with
	the extended weak minimality in problem A\, at the process $w^0.$
	\qed
\end{proof}

\section{Stationarity Conditions for Problem B}\label{section_6}
Let us agree to denote the derivatives of $f,\,g$ w.r.t. first, second, and
third arguments as $f'_z,\, g'_z,$ $f'_x,\,g'_x,$ $f'_u,\,g'_u,$ respectively,
no matter
{on which variables these functions depend.}

The three constraints \eqref{probl_2_3} will be treated as mixed control-state
ones. In order to apply the known stationarity conditions, 
we have to check whether these constraints are regular along the
reference process $\gamma^0(\tau).$
\ssk

{According to \cite{MO,MDO,SIC}, mixed control-state constraints
	$\Phi_i(t,x,u)\le 0$ and $G_j(t,x,u)=0$ of equality and inequality type
	given by smooth functions on $\R\times \R^n \times \R^r$ are called {\it
		regular}\, at a point $(t,x,u)$ if their gradients w.r.t control
	are positive--linearly independent, which means that there do not exist
	multipliers $\alpha_i \ge 0$ and $\beta_j$ with $\sum \alpha_i + \sum |\beta_j| >0$
	and $\alpha_i\,\Phi_i(t,x,u)=0$ such that
	$$
	\sum \alpha_i\,\Phi'_{iu}(t,x,u)\; + \;\sum \beta_j\,G'_{ju}(t,x,u) =0.
	$$}

Applying this to the constraints \eqref{probl_2_3}, one can
easily see that the gradients w.r.t control $v =(v_1, v_2, v_3)$ of these
constraints are positive--linearly independent along the reference process
{(since they decompose into the gradients w.r.t each component $v_i),$}
hence their gradients w.r.t the ``full'' control vector $(v,\rho)$ are
the more so positive--linearly independent, and thus, the mixed constraints
in problem B\, are regular.
\ssk

Assume the process
$\gamma^0 $:= $(r_i^0(\tau),\, y_i^0(\tau),\, t_i^0(\tau),\, \rho_i^0(\tau),\,
v_i^0(\tau),$ $i=1,2,3)$ provides the weak minimality in problem B.
Then it satisfies the stationarity conditions, which
say the following (see, e.g. \cite{MO,MDO,SIC}):\,
there exist multipliers $\alpha_0,\,\alpha_1,$ $\beta_j,\; j=1, ..., 8,$
Lipschitz functions $\psi_{r_i}, \psi_{y_i}, \psi_{t_i},$ $i=1,2,3,$
measurable bounded functions $h_1(\tau),\, h_3(\tau)$ of dimension
$d(\varphi),$ and a measurable bounded scalar function $\sigma(\tau),$
such that the following conditions are satisfied: \\[4pt]
nontriviality condition
\begin{equation}\label{nodegenerativity}
	|\alpha_0| + |\alpha_1| + \sum |\beta_j| + \int_0^1 |h_1(\tau)| d\tau +
	\int_0^1 |\sigma(\tau)| d\tau + \int_0^1 |h_3(\tau)| d\tau > 0,
\end{equation}
non-negativity condition
\begin{equation}\label{nonnegativity}
	\alpha_0 \geq 0, \quad \alpha_1 \geq 0, \quad
	h_1(\tau) \geq 0,\quad h_3(\tau) \geq 0,
\end{equation}
complementary slackness condition
\begin{equation}\label{compslackness}
	\alpha_1\, y_2(0) = 0, \qquad h_1(\tau) \varphi(v^0_1(\tau)) = 0,
	\qquad h_3(\tau) \varphi(v^0_3(\tau))  = 0,
\end{equation}
and such that, in terms of the endpoint Lagrange function
\begin{multline}\label{endpoint_lagrange}
	l = \alpha_0 J\left(r_1(0), r_3(1), y_1(0), y_3(1)\right) +
	\beta_1 t_1(0) +  \beta_2 \left(t_1(1) - t_2(0)\right) + {} \\
	{} +\beta_3 \left(t_2(1) - t_3(0)\right) + \beta_4 \left(t_3(1) -
	T\right) + \beta_5 \left( r_1(1) - r_2(0)\right) +
	\beta_6 \left( r_2(1) - r_3(0)\right) +{} \\
	{} +  \beta_7 \left( y_1(1) - y_2(0)\right) +
	\beta_{8} \left( y_2(1) - y_3(0)\right)  - \alpha_{1} y_2(0)
\end{multline}
and the extended Pontryagin function
\begin{multline}\label{H}
	\overline{\varPi} = \psi_{r_1} \rho_1 f(r_1, y_1, v_1) +
	\psi_{t_1} \rho_1  + \psi_{y_1} \rho_1 g(r_1, y_1, v_1) + {} \\ {}
	+ \psi_{r_2} \rho_2 f(r_2, y_2, v_2) + \psi_{t_2} \rho_2 +
	\psi_{y_2} \rho_2 g(r_2, y_2, v_2) + {} \\ {}
	+ \psi_{r_3} \rho_3 f(r_3, y_3, v_3) + \psi_{t_3} \rho_3  +
	\psi_{y_3} \rho_3 g(r_3,  y_3, v_3) - {} \\ {} -
	\sigma \rho_2 g(r_2, y_2, v_2) - h_1 \varphi\left(v_1 \right) -
	h_3 \varphi\left(v_3 \right), \phantom{tttttt}
\end{multline}
the following conditions are also satisfied: \\[2pt]
adjoint equations and transversality conditions
\begin{equation}\label{adjoint_z}
	\begin{cases}
		\begin{aligned}
			- \frac{d \psi_{r_1}}{d \tau} &=\;
			\rho^0_1 \Big( \psi_{r_1} f'_z(r^0_1, y^0_1, v^0_1) +
			\psi_{y_1} g'_z\left(r^0_1, y^0_1, v^0_1\right)\Big),\\
			- \frac{d \psi_{r_2}}{d \tau} &=\;
			\rho^0_2 \Big( \psi_{r_2} f'_z(r^0_2, y^0_2, v^0_2) +
			(\psi_{y_2} - \sigma)  g'_z\left(r^0_2, y^0_2, v^0_2\right)\Big),\\
			- \frac{d \psi_{r_3}}{d \tau} &=\;
			\rho^0_3 \Big( \psi_{r_3} f'_z(r^0_3, y^0_3, v^0_3) +
			\psi_{y_3}  g'_z\left(r^0_3, y^0_3, v^0_3\right)\Big),\\[4pt]
			\psi_{r_1}(0) &= \alpha_0 J'_{z(0)}\,, \qquad \psi_{r_1}(1) = - \beta_5,\\
			\psi_{r_2}(0) &= - \beta_5, \qquad \quad\;\; \psi_{r_2}(1) = - \beta_6\\
			\psi_{r_3}(0) &= - \beta_6, \qquad \quad\;\;\psi_{r_3}(1) = - \alpha_0 J'_{z(T)}\,,
		\end{aligned}
	\end{cases}
\end{equation}

\begin{equation}\label{adjoint_x}
	\begin{cases}
		\begin{aligned}
			- \frac{d \psi_{y_1}}{d \tau} &=\;
			\rho^0_1 \Big( \psi_{r_1} f'_x(r^0_1, y^0_1, v^0_1) +
			\psi_{y_1} g'_x\left(r^0_1, y^0_1, v^0_1\right)\Big),\\
			- \frac{d \psi_{y_2}}{d \tau} &=\;
			\rho^0_2 \Big( \psi_{r_2} f'_x(r^0_2, y^0_2, v^0_2) +
			(\psi_{y_2} - \sigma) g'_x\left(r^0_2, y^0_2, v^0_2\right)\Big), \\
			- \frac{d \psi_{y_3}}{d \tau} &=\;
			\rho^0_3 \Big( \psi_{r_3} f'_x(r^0_3, y^0_3, v^0_3) +
			\psi_{y_3} g'_x\left(r^0_3, y^0_3, v^0_3\right)\Big) ,\\[4pt]
			\psi_{y_1}(0) &= \alpha_0 J'_{x(0)}\,, \qquad\quad
			\psi_{y_1}(1) = - \beta_7,\\
			\psi_{y_2}(0) &= - \beta_7 - \alpha_{1}, \qquad\;
			\psi_{y_2}(1) = - \beta_{8},\\
			\psi_{y_3}(0) &= - \beta_{8}, \qquad\qquad\;\;
			\psi_{y_3}(1) = - \alpha_0 J'_{x(T)}\,,\\
		\end{aligned}
	\end{cases}
\end{equation}

\begin{equation}\label{adjoint_t}
	\begin{cases}
		\begin{aligned}
			- \frac{d \psi_{t_1}}{d \tau} &= 0, &\quad
			\psi_{t_1}(0) &= \beta_1, &\quad \psi_{t_1}(1) &= - \beta_2\\
			- \frac{d \psi_{t_2}}{d \tau} &= 0, &
			\psi_{t_2}(0) &= - \beta_2, &\psi_{t_2}(1) &= - \beta_3\\
			- \frac{d \psi_{t_3}}{d \tau} &= 0, &
			\psi_{t_3}(0) &= - \beta_3, &\psi_{t_3}(1) &= - \beta_4,\\
		\end{aligned}
	\end{cases}
\end{equation}
stationarity conditions w.r.t\, controls $v_i,$ $i = 1,2,3:$
\begin{equation}\label{stationarity}
	\begin{cases}
		\begin{aligned}
			\overline{\varPi}_{v_1} &= 0,\\
			\overline{\varPi}_{v_2} &= 0,\\
			\overline{\varPi}_{v_3} &= 0,\\
		\end{aligned}
	\end{cases}
	\Leftrightarrow
	\begin{cases}
		\begin{aligned}
			\psi_{r_1}f'_u(r^0_1, y^0_1, v^0_1) + \psi_{y_1} g'_u(r^0_1, y^0_1, v^0_1) & = \frac{h_1 \varphi'_u(v^0_1)}{\rho^0_1}\,,\\
			\psi_{r_2}f'_u(r^0_2, y^0_2, v^0_2) + \psi_{y_2} g'_u(r^0_2, y^0_2, v^0_2) & = \sigma g'_u(r^0_2, y^0_2, v^0_2),\\
			\psi_{r_3}f'_u(r^0_3, y^0_3, v^0_3) + \psi_{y_3} g'_u(r^0_3, y^0_3, v^0_3) & = \frac{h_3 \varphi'_u(v^0_3)}{\rho^0_3}\,,
		\end{aligned}
	\end{cases}
\end{equation}
and stationarity conditions w.r.t\, controls $\rho_i,$ $i = 1,2,3:$
\begin{equation}\label{stationarity_rho}
	\begin{cases}
		\begin{aligned}
			\overline{\varPi}_{\rho_1} &= 0,\\[3pt]
			\overline{\varPi}_{\rho_2} &= 0,\\[3pt]
			\overline{\varPi}_{\rho_3} &= 0,\\[3pt]
		\end{aligned}
	\end{cases}
	\Leftrightarrow
	\begin{cases}
		\begin{aligned}
			\psi_{r_1}f(r^0_1, y^0_1, v^0_1) + \psi_{y_1} g'(r^0_1, y^0_1, v^0_1) + \psi_{t_1} & = 0,\\[3pt]
			\psi_{r_2}f(r^0_2, y^0_2, v^0_2) +(\psi_{y_2} - \sigma) g(r^0_2, y^0_2, v^0_2) + \psi_{t_2} & = 0,\\[3pt]
			\psi_{r_3}f(r^0_3, y^0_3, v^0_3) +\psi_{y_3} g(r^0_3, y^0_3, v^0_3) + \psi_{t_3}& = 0.
		\end{aligned}
	\end{cases}
\end{equation} 

\noindent
Here, $J'_{z(0)},\, J'_{z(T)},\, J'_{x(0)},\, J'_{x(T)}$ are the derivatives
of $J(z(0), z(T), x(0), x(T))$ w.r.t the corresponding variables, taken at the
point $(r_1^0(0), r_3^0(1), y_1^0(0), y_3^0(1)).$  \ssk

Note that, since the function $u^0(\tau)$ is Lipschitz continuous on $\Delta_2\,,$
the second equation in \eqref{stationarity} and the nondegeneracy of
$g'_u$ implies that $\sigma(\tau)$ is also Lipschitz continuous.

Fist of all, let us state the following
\begin{lemma}
	$\alpha_0 > 0$ (hence, one can set $\alpha_0 = 1$).
\end{lemma}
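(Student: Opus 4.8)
The plan is to argue by contradiction: suppose $\alpha_0 = 0$ and show that the remaining multipliers are then forced to vanish too, contradicting the nontriviality condition \eqref{nodegenerativity}. First I would look at the transversality conditions in \eqref{adjoint_z}: with $\alpha_0 = 0$ we get $\psi_{r_1}(0) = 0$, and since the adjoint equation for $\psi_{r_1}$ is a homogeneous linear ODE on $\Delta_1$, uniqueness gives $\psi_{r_1} \equiv 0$ on $\Delta_1$, hence $\beta_5 = -\psi_{r_1}(1) = 0$. Then $\psi_{r_2}(0) = -\beta_5 = 0$; the adjoint equation for $\psi_{r_2}$ is again homogeneous linear (the coupling to $\psi_{y_2}-\sigma$ does not matter for uniqueness of the zero solution once we also control $\psi_{y_2}$, so I would in fact treat the $(\psi_{r},\psi_{y})$ system jointly), and likewise $\psi_{r_3}(1) = -\alpha_0 J'_{z(T)} = 0$ propagates backward to give $\psi_{r_3}\equiv 0$ and $\beta_6 = 0$.

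Next I would push the same idea through the $\psi_{y_i}$ equations in \eqref{adjoint_x}. With $\alpha_0 = 0$ we have $\psi_{y_1}(0) = 0$ and $\psi_{y_3}(1) = 0$, so by uniqueness of the homogeneous linear systems $\psi_{y_1}\equiv 0$ on $\Delta_1$ and $\psi_{y_3}\equiv 0$ on $\Delta_3$, whence $\beta_7 = -\psi_{y_1}(1) = 0$ and $\beta_8 = -\psi_{y_3}(0) = 0$. The stationarity conditions \eqref{stationarity} for $v_1$ and $v_3$ then read $0 = h_1\varphi'_u(v_1^0)/\rho_1^0$ and $0 = h_3\varphi'_u(v_3^0)/\rho_3^0$; by the positive independence of the active gradients $\varphi'_s(u^0(t))$ on $\Delta_1\cup\Delta_3$ together with the complementary slackness $h_i\varphi(v_i^0) = 0$ and the sign condition $h_i \ge 0$, this forces $h_1 \equiv 0$ and $h_3 \equiv 0$. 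The stationarity condition \eqref{stationarity_rho} for $\rho_1$ now gives $\psi_{t_1} = 0$ (since $\psi_{r_1} = \psi_{y_1} = 0$), and similarly $\psi_{t_3} = 0$; reading off \eqref{adjoint_t} yields $\beta_1 = \beta_2 = \beta_4 = 0$ and $\beta_3 = -\psi_{t_3}(0) = 0$.

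It remains to kill $\alpha_1$ and the middle-arc multipliers. From $\psi_{y_1}(1) = 0$ and the junction condition $\psi_{y_2}(0) = -\beta_7 - \alpha_1 = -\alpha_1$; also $\psi_{y_2}(1) = -\beta_8 = 0$. On $\Delta_2$ the adjoint system for $(\psi_{r_2},\psi_{y_2})$ involves the combination $\psi_{y_2}-\sigma$, and the $\rho_2$-stationarity condition \eqref{stationarity_rho} together with the mixed constraint $g(r_2^0,y_2^0,v_2^0)\equiv 0$ gives $\psi_{t_2}\equiv 0$, hence $\beta_2 = \beta_3 = 0$ (already known) and consistency on $\Delta_2$. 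Here is where the order-one assumption $g'_u \ne 0$ on $\Delta_2$ enters: the $v_2$-stationarity condition \eqref{stationarity} reads $\psi_{r_2}f'_u + \psi_{y_2}g'_u = \sigma g'_u$, i.e. $\psi_{r_2}f'_u = (\sigma - \psi_{y_2})g'_u$, and combined with the already-established $\psi_{r_2}\equiv 0$ this gives $(\sigma - \psi_{y_2})g'_u = 0$, so $\sigma \equiv \psi_{y_2}$ on $\Delta_2$; then the $\psi_{y_2}$-equation becomes $-\dot\psi_{y_2} = \rho_2^0(\psi_{r_2}f'_x + 0\cdot g'_x) = 0$, so $\psi_{y_2}$ is constant, equal to its right endpoint value $-\beta_8 = 0$; hence $\psi_{y_2}\equiv 0$ and $\sigma\equiv 0$, and $\alpha_1 = -\psi_{y_2}(0) = 0$. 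At this point every multiplier in \eqref{nodegenerativity} vanishes, the desired contradiction. I expect the main obstacle to be the bookkeeping on $\Delta_2$ — making sure that the coupling between $\psi_{r_2}$, $\psi_{y_2}$ and $\sigma$ is disentangled cleanly and that $\psi_{r_2}\equiv 0$ is genuinely available before invoking $g'_u\ne 0$; one must verify that the transversality values at the two ends of $\Delta_2$, propagated from $\Delta_1$ and $\Delta_3$, are consistent with the homogeneous ODE so that no nontrivial solution hides on the boundary arc.
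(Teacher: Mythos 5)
Your proposal follows the paper's proof essentially step for step: assume $\alpha_0=0$, propagate the zero boundary data through the homogeneous linear adjoint systems on $\Delta_1$ and $\Delta_3$ to kill $\psi_{r_1},\psi_{y_1},\psi_{r_3},\psi_{y_3}$ and hence $\beta_5,\dots,\beta_8$ and $h_1,h_3$, then handle $\Delta_2$ via the $v_2$-stationarity condition and $g'_u\ne 0$, and finish with the $\rho_i$-stationarity to eliminate $\psi_{t_i}$ and $\beta_1,\dots,\beta_4$. The one spot to tighten is exactly the one you flag: you assert $\psi_{r_2}\equiv 0$ from $\psi_{r_2}(0)=0$ before introducing the relation $\psi_{r_2}f'_u=(\sigma-\psi_{y_2})g'_u$ that is needed to justify it, since the raw $\psi_{r_2}$-equation contains the unknown term $(\psi_{y_2}-\sigma)g'_z$ and $\psi_{y_2}(0)=-\alpha_1$ is not yet known to vanish, so neither the scalar equation nor the joint system is closed or has zero data at $\tau=0$. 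The paper's ordering removes the circularity: first use the $v_2$-stationarity and $g'_u\ne 0$ to write $\sigma=A\psi_{r_2}+B\psi_{y_2}$ with Lipschitz coefficients, which turns the $(\psi_{r_2},\psi_{y_2})$ adjoint system into a closed homogeneous linear one, and then integrate backward from the zero terminal values $\psi_{r_2}(1)=-\beta_6=0$ and $\psi_{y_2}(1)=-\beta_8=0$; after that your chain $\sigma\equiv\psi_{y_2}\equiv 0$, $\alpha_1=0$ goes through as written.
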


\begin{proof}\quad
	Suppose that $\alpha_0 = 0.$ Then by \eqref{adjoint_z}--\eqref{adjoint_x},
	the pair $(\psi_{r_1},\,\psi_{y_1})$ satisfies a linear system of ODEs
	with initial conditions $\psi_{r_1}(0) =0,\; \psi_{y_1}(0) =0,$ whence
	$\psi_{r_1}$ and $\psi_{y_1}$  {identically vanish}.
	Similarly, $\psi_{r_3}$ and $\psi_{y_3}$  {vanish too}, hence
	$\beta_5 = \beta_6 = 0$ and $\beta_7 = \beta_8 = 0.$ 
	
	In view of \eqref{stationarity}, we get $h_1(\tau) \equiv h_3(\tau) \equiv 0$
	and $\sigma(\tau) = A \psi_{r_2} + B \psi_{y_2}$ with some Lipschitz continuous
	functions $A(\tau),\, B(\tau);$ moreover,  since $\psi_{r_2}(1) = 0$ and
	$\psi_{y_2}(1) = 0,$ we have $\sigma(1) = 0.$ Thus, in view of
	\eqref{adjoint_z}--\eqref{adjoint_x}, $\psi_{r_2}$ and $\psi_{y_2}$ satisfy
	a system of linear ODEs with zero boundary values at $\tau=1,$ which implies
	that $\psi_{r_2}\equiv 0$ and $\psi_{y_2}\equiv 0.$
	Therefore, $\sigma(\tau) \equiv 0$ and  {by \eqref{adjoint_x}} $\alpha_{1} = 0,$
	then in view of \eqref{stationarity_rho} we get $\psi_{t_1} = \psi_{t_2} =\psi_{t_3} =0,$
	hence $\beta_1 = \beta_2 = \beta_3 = \beta_4 = 0.$ Thus, the whole collection
	of multipliers is trivial, a contradiction with \eqref{nodegenerativity}. \qed
\end{proof}

\section{Stationarity Conditions in Terms of the Original Problem A}\label{section_7}

Let us rewrite the stationarity conditions from Sec. \ref{section_6} in
terms of the original problem \eqref{problem_A}. To do this, define functions
$m(t)$ and $h(t)$ on the interval $[0,T]$ as follows:
\begin{equation}\label{new_m}
	m(t) := \begin{cases}
		\begin{aligned}
			0 & \; \mbox{ on }\, \Delta_1,\\
			\sigma(\tau(t)) & \; \mbox{ on }\, \Delta_2,\\
			0 & \; \mbox{ on }\, \Delta_3,
		\end{aligned}
	\end{cases} \qquad
	h(t) : = \begin{cases}
		\begin{aligned}
			h_1(\tau(t))  & \;\, \mbox{ on }\,\Delta_1,\\
			0 & \;\, \mbox{ on }\,\Delta_2,\\
			h_3(\tau(t)) & \;\, \mbox{ on }\, \Delta_3.
		\end{aligned}
	\end{cases}
\end{equation}

\noindent
Notice that function $m(t)$ is Lipschitz continuous on the intervals
$\Delta_1,\, \Delta_2,$ $\Delta_3.$ By $\dot{m}(t)$ we will denote its
generalized derivative. Since
$\dfrac{d \psi}{dt} = \dfrac{d\psi}{d\tau} \Big/ \dfrac{dt}{d\tau},$
then, getting back from the new time $\tau$ to the original time $t$ in
equations \eqref{adjoint_z}--\eqref{adjoint_t}, we obtain the following
equations on the whole interval $[0,T]$:
\begin{equation}\label{adjoint_eq_1}
	\begin{cases} \displaystyle
		- \dot \psi_{z} =\; -\frac{1}{\rho^0_i} \frac{d\psi_{r_i}}{d\tau} =\;
		\psi_z f'_z + (\psi_x - m) g'_z\,,  \\[10pt]
		
		\displaystyle -\dot \psi_{x} =\; -\frac{1}{\rho^0_i}
		\frac{d \psi_{y_i}}{d\tau} =\; \psi_z f'_x + (\psi_x - m) g'_x\,, \\[10pt]
		
		\displaystyle  -\dot \psi_{t} =\; -\frac{1}{\rho^0_i}
		\frac{d \psi_{t_i}}{d\tau} =\; 0.
	\end{cases}
\end{equation}

Since the state variables $r_i, y_i, t_i$ of problem B\, are continuously
joined at the corresponding ends of interval $[0,1]$ by the junction conditions
\eqref{ryt01}, the state variables $z(t),\,x(t)$ of problem A are continuous
(and moreover, Lipschitz continuous). By similar arguments, the adjoint
variables $\psi_z,\;\psi_t$ of problem A are also Lipschitz continuous.\,
Consider the function $\psi_x$. \smallskip

Note first that it is Lipschitz continuous on every interval $\Delta_i,$ $i = 1,2,3$.
Rewriting the transversality conditions for $\psi_x$ in terms of problem A,
we get the following junction conditions:
\begin{equation}\label{psi_x_jumps}
	\begin{cases}
		\begin{aligned}
			\psi_x(t_1^0 - 0) &= - \beta_7, \qquad &\psi_x(t_1^0 + 0) &= - \beta_7 - \alpha_{1},\\[2pt]
			\psi_x(t_2^0 - 0) &= - \beta_{8}, \qquad &\psi_x(t_2^0  + 0) &=  - \beta_{8},
		\end{aligned}
	\end{cases}
\end{equation}
i.e., $\psi_x$ is continuous at $t_2^0$ and has the jump
$\Delta\psi_x(t_1^0) = - \alpha_{1} \leq 0$ at the point $t_1.$
At the ends of interval $[0,T],$ it satisfies the transversality conditions
\begin{equation}\label{trat}
	\begin{cases}
		\psi_z(0) = J'_{z(0)}\,, \qquad \psi_z(T) = -J'_{z(T)}\,, \\[2pt]
		\psi_x(0) = J'_{x(0)}\,, \qquad \psi_x(T) = -J'_{x(T)}\,.
	\end{cases}
\end{equation}

If we introduce the extended Pontryagin function for the problem with mixed
control-state constrains
\begin{equation}\label{H_probl_1}
	\overline{K}(z,x,u) = \psi_z f(z, x, u) + \psi_x g(z, x, u) -
	{m} g (z, x, u) - {h} \varphi(u),
\end{equation}
then, in view of \eqref{adjoint_eq_1}, we obtain the fulfilment of adjoint
equations
\begin{equation}\label{adjoint_sys_probl_1}
	- \dot{\psi}_z = \overline{K}'_z(z^0, x^0, u^0),\quad\; -
	\dot{\psi}_x = \overline{K}'_x(z^0, x^0, u^0), \quad\; -
	\dot{\psi}_t = \overline{K}'_t(z^0, x^0, u^0)
\end{equation}
on the interval $[0,T]$ except the points $t_1^0,\,t_2^0\,,$ and the
fulfilment of stationarity condition w.r.t. control $u$ for all $t$:
\begin{equation}\label{stat_conds_probl_1}
	\overline{K}'_u = \psi_z f'_u (z^0, x^0, u^0) +
	(\psi_x - {m})\, g'_u (z^0, x^0, u^0) - {h} \varphi'_u(u^0) = 0.
\end{equation}

Let us now rewrite these conditions in terms of problem A involving a state
constraint. To do this, set $\widetilde{\psi}_x(t) = \psi_x(t) -m(t),$
introduce the Pontryagin function of this problem
$$
H = \;\psi_z f(z, x, u) + \widetilde{\psi}_x g(z, x, u)
$$
and the extended Pontryagin function
\begin{equation}\label{new_H}
	\overline{H} = \;\psi_z f(z, x, u) +
	\widetilde{\psi}_x g\left(z, x, u\right) + \dot m x - h\varphi(u)
\end{equation}
with a multiplier $\dot{m}(t)$ at the state constraint. It is easy to verify
that, along the interval $[0,T]$ except the points $t_1^0,\,t_2^0\,,$
the adjoint equations
\begin{equation}\label{adjoint_sys}
	\dot{\psi}_z =\; - \overline{H}'_z\,,\qquad
	\dot{\widetilde{\psi}}_x =\; - \overline{H}'_x\,,
\end{equation}
and stationarity condition w.r.t. control $\overline{H}'_u = 0$ hold.

The transversality conditions \eqref{trat} are obviously still satisfied.
In view of \eqref{new_m} and \eqref{psi_x_jumps}, the adjoint variable
$\widetilde{\psi}_x$ has the jumps
\begin{equation}\label{pxm}
	\Delta \widetilde\psi_x(t_1^0) = - \alpha_1 - {m}(t_1^0+0),\qquad
	\Delta \widetilde\psi_x(t_2^0)= {m}(t_2^0-0).
\end{equation}

Since $\dot{\psi}_t = 0$, equation \eqref{stationarity_rho} rewritten in
time $t$ turns into
\begin{equation}\label{cons_law}
	\psi_{z} f(z^0, x^0, u^0) + \widetilde{\psi}_{x} g\left(z^0, x^0, u^0\right) + \psi_{t}\; = 0,
\end{equation}
which is equivalent to the ``energy conservation law'' $H(z^0,x^0,u^0) = \const.$
\ssk

Note that we get $x^0 = 0$ on $\Delta_2\,,$ while outside $\Delta_2$ we get
$\dot m \equiv 0,$ i.e., the complementary slackness condition for the
state constraint holds:
\begin{equation}
	\dot{m}(t)\, x^0(t) = 0,\quad \mbox{ i.e., the measure } \quad dm(t)\,x^0(t)=0.
\end{equation}
The definition of ${h}$ and condition \eqref{compslackness} imply that
the complementary slackness condition holds also for the control constraint:
\begin{equation}
	h(t)\, \varphi(u^0(t)) = 0.
\end{equation}

\section{Non-negativity of Multiplier at the State Constraint}
\label{section_8}

We have obtained stationarity conditions in problem A, in which the measure is
absolute continuous on the interval $\Delta_2$ with density $\dot{m}(t)$
and has the jumps (atoms) $- \alpha_1 - {m}(t_1^0 + 0)$ and ${m}(t_2^0 - 0)$
at the points $t_1^0,\, t_2^0,$ respectively. Our next aim is to define
the sign of its density and jumps.  To this end, we take into account that
we have feasible variations $\bar{x}(t) \ge 0$ on $\Delta_2$ in our disposal.

Consider first any triple $\bar{w}(t) = (\bar{z}(t),\,\bar{x}(t),\,\bar{u}(t))$
satisfying the linearized system in variations along the process $w^0(t)$
on $[0,T]$:
\begin{equation}\label{var_syst}
	\begin{cases}
		\begin{aligned}
			\dot{\bar{z}} &= f'_z \bar{z} + f'_x \bar{x} + f'_u \bar{u}\,, & \\[4pt]
			\dot{\bar{x}} &= g'_z \bar{z} + g'_x \bar{x} + g'_u \bar{u}\,. &
		\end{aligned}
	\end{cases}
\end{equation}

The main technical formula to use is defined by the following

\begin{lemma}\label{lemma_variation}
	Let be given Lipschitz continuous functions $\psi_z(t),\, z(t),\, x(t)$
	and measurable bounded functions $h(t),\, u(t)$ on an interval $[0,T].$
	Let be also given functions $\psi_x(t),\, m(t)$ Lipschitz continuous on
	intervals $\Delta_1 =[0,t_1],$ $\Delta_2 =[t_1,t_2],$ $\Delta_3 =[t_2,T]$
	with possible jumps at the points $t_1,\,t_2,$ where $0< t_1 <t_2 <T,$
	such that the following relations hold on every above interval:
	\begin{equation}\label{lemma_eq_2}
		\begin{aligned}
			\dot \psi_z = - \psi_z f'_z - (\psi_x  - m )\, g'_z,
			\qquad  \dot \psi_x = - \psi_z f'_x - (\psi_x - m )\,g'_x, \\[4pt]
			\psi_z f'_u\, + (\psi_x - m )\, g'_u - h \varphi'_u = 0.
		\end{aligned}
	\end{equation}
	Then any solution $\bar{w} = (\bar{z},\bar{x},\bar{u})$ of system
	\eqref{var_syst} on $[0, T]$ satisfies the following equality:
	\begin{multline}\label{lemma_res}
		\psi_z(T) \overline{z}(T) + \psi_x(T) \overline{x}(T) -
		\psi_z(0) \overline{z}(0) - \psi_x(0) \overline{x}(0)\; =
		\int_0^{t_1} m \dot{\bar{x}}\,dt\, + \int_{t_2}^{T} m \dot{\bar{x}}\,dt + {} \\
		+ \bigl(\Delta \psi_x(t_1)- m(t_1+0)\bigr)\, \bar{x}(t_1) +
		\bigl(\Delta \psi_x(t_2)\; +\;  m(t_2-0)\bigr)\, \bar{x}(t_2)\,-  \\ -
		\int_{t_1}^{t_2} \dot{m} \bar{x}\, dt\, + \int_0^T h \varphi'_u \bar{u}\, dt,
		\phantom{rr}
	\end{multline}
	where $\Delta \psi_x(t_{i})$ are the jumps of $\psi_x$ at the points
	$t_1,\, t_2\,.$
\end{lemma}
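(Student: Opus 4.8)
The natural approach is to start from the quantity $\frac{d}{dt}\bigl(\psi_z\bar z + \psi_x\bar x\bigr)$ on each of the three subintervals $\Delta_1,\Delta_2,\Delta_3$ separately (since $\psi_x,m$ are only piecewise Lipschitz) and integrate. On each interval, using the product rule and substituting both the adjoint equations from \eqref{lemma_eq_2} and the variational system \eqref{var_syst}, the terms involving $\bar z,\bar x$ with coefficients $f'_z,f'_x,g'_z,g'_x$ should cancel in the usual Hamiltonian way, leaving
$$
\frac{d}{dt}\bigl(\psi_z\bar z + \psi_x\bar x\bigr)
= \bigl(\psi_z f'_u + \psi_x g'_u\bigr)\bar u + m\,g'_z\bar z + m\,g'_x\bar x.
$$
Now I would use the stationarity condition $\psi_z f'_u + (\psi_x-m)g'_u - h\varphi'_u = 0$ to replace $(\psi_z f'_u+\psi_x g'_u)\bar u$ by $m\,g'_u\bar u + h\varphi'_u\bar u$, so that the three $m$-terms combine into $m\,(g'_z\bar z + g'_x\bar x + g'_u\bar u) = m\,\dot{\bar x}$ by the second equation of \eqref{var_syst}. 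Hence on each subinterval
$$
\frac{d}{dt}\bigl(\psi_z\bar z + \psi_x\bar x\bigr) = m\,\dot{\bar x} + h\,\varphi'_u\bar u .
$$

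Next I would integrate this identity over $\Delta_1$, $\Delta_2$, $\Delta_3$ and add the three results. On $\Delta_2$ I would additionally integrate $m\dot{\bar x}$ by parts, writing $\int_{t_1}^{t_2} m\dot{\bar x}\,dt = m(t_2-0)\bar x(t_2) - m(t_1+0)\bar x(t_1) - \int_{t_1}^{t_2}\dot m\,\bar x\,dt$; this is where the $-\int_{t_1}^{t_2}\dot m\,\bar x\,dt$ term and part of the bracketed boundary coefficients in \eqref{lemma_res} come from. Since $m\equiv 0$ on $\Delta_1$ and $\Delta_3$, the $\int_0^{t_1} m\dot{\bar x}$ and $\int_{t_2}^T m\dot{\bar x}$ terms in \eqref{lemma_res} are actually zero, but they are kept in that form because the statement only asks for the equality; I would simply carry them symbolically. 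The $h\varphi'_u\bar u$ terms glue into the single integral $\int_0^T h\varphi'_u\bar u\,dt$ since $\bar u$ is measurable bounded on all of $[0,T]$.

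The one genuinely delicate bookkeeping step — and the place I expect to spend the most care — is the treatment of the interior endpoints $t_1,t_2$. Summing $\bigl[\psi_z\bar z + \psi_x\bar x\bigr]$ evaluated at the subinterval endpoints produces, besides the net $\psi_z(T)\bar z(T)+\psi_x(T)\bar x(T)-\psi_z(0)\bar z(0)-\psi_x(0)\bar x(0)$, leftover contributions at $t_1$ and $t_2$; since $\psi_z,\bar z,\bar x$ are continuous there while $\psi_x$ may jump, these collapse to $-\Delta\psi_x(t_1)\bar x(t_1)$ and $-\Delta\psi_x(t_2)\bar x(t_2)$ with the right signs (the "left-minus-right" convention must be tracked consistently). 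Combining these with the $\pm m(t_i\mp 0)\bar x(t_i)$ terms produced by the integration by parts on $\Delta_2$ yields exactly the bracketed coefficients $\bigl(\Delta\psi_x(t_1)-m(t_1+0)\bigr)\bar x(t_1)$ and $\bigl(\Delta\psi_x(t_2)+m(t_2-0)\bigr)\bar x(t_2)$ in \eqref{lemma_res}. Rearranging all terms to the stated side gives the claimed formula; everything else is the routine Hamiltonian cancellation.
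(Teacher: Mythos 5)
Your proposal is correct and follows essentially the same route as the paper's proof: differentiate $\psi_z\bar z+\psi_x\bar x$, use the adjoint equations together with the stationarity condition \eqref{lemma_eq_2} and the variational system \eqref{var_syst} to reduce the derivative to $m\,\dot{\bar x}+h\,\varphi'_u\bar u$, then integrate piecewise with integration by parts on $\Delta_2$ and collect the jump terms of $\psi_x$. The only quibble is your aside that $m\equiv 0$ on $\Delta_1\cup\Delta_3$ --- that is a property of the later application, not a hypothesis of the lemma --- but it is harmless since you carry those integrals symbolically anyway.
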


\begin{proof}
	In view of \eqref{lemma_eq_2}, we have, on every interval $\Delta_i$:
	\begin{multline*}
		\frac{d}{dt}\left( \psi_z \bar{z} + \psi_x \bar{x} \right) = \;
		\left(- \psi_z f'_z - (\psi_x - m) g'_z \right) \bar{z}\; +\,
		\psi_z (f'_z \bar{z} + f'_x \bar{x} + f'_u \bar{u}) + {} \\
		{} + \left(- \psi_z f'_x - (\psi_x - m) g'_x \right) \bar{x}\, +\,
		\psi_z (g'_z \bar{z} + g'_x \bar{x} + g'_u \bar{u})\; = {} \\ {} =\;
		m g'_z \bar{z} + m g'_x \bar{x}  + m g'_u \bar{u} + h \varphi'_u \bar{u}\;
		=\; m \dot{\overline{x}}\, +\, h \varphi'_u \overline{u}.
	\end{multline*}
	Integrating this equality on the whole interval $[0,T]$ (on $\Delta_2\,,$ we integrate
	$m\dot{\bar{x}}$ by parts)\, and taking into account possible jumps of $\psi_x$
	at the points $t_1,\,t_2,$ we get that the left hand part of \eqref{lemma_res} is equal to
	\begin{multline*}
		\int_0^T d\,(\psi_z \bar{z} + \psi_x \bar{x})\; =
		\int_0^{t_1} m \dot{\bar{x}}\, dt\, + \int_{t_2}^T m \dot{\bar{x}}\, dt\, +\,
		m\,\bar{x}\Bigr|_{t_1}^{t_2}\, -  \int_{t_1}^{t_2} \dot m\, \bar{x}\, dt\, +\; \\
		+\, \Delta \psi_x(t_1)\, \bar{x}(t_1)\, +\, \Delta \psi_x(t_2)\, \bar{x}(t_2)\, +
		\int_0^T h\, \varphi'_u \bar{u}\, dt,
	\end{multline*}
	which implies the required equality \eqref{lemma_res}. \qed
\end{proof}

Now, we introduce variations of some special type.

\begin{lemma}\label{lemma_variation_1}
	For any Lipschitz continuous function $\varkappa(t)$ defined on the interval
	$\Delta_2 =[t_1^0,\,t_2^0],$    there exists a solution $(\bar{z}(t),\bar{x}(t),\bar u(t))$
	of system \eqref{var_syst} on $\Delta_2$ such that $\bar{x}(t) = \varkappa(t).$
\end{lemma}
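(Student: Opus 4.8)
The plan is to construct the required solution of the variational system directly, choosing the control variation $\bar u$ so as to force $\bar x \equiv \varkappa$ on $\Delta_2$. The key observation is that on the boundary arc the state constraint has order $1$, i.e. $g'_u(z^0(t),x^0(t),u^0(t)) \ne 0$ throughout $\Delta_2$. Hence the scalar (or, more precisely, $1\times m$) matrix $g'_u$ has a right inverse, and we may solve the second equation of \eqref{var_syst} for (a particular choice of) $\bar u$ in terms of $\bar z$, $\bar x$, and the prescribed $\dot{\bar x}$. Concretely, set $\bar x(t) := \varkappa(t)$ on $\Delta_2$, pick the initial value $\bar z(t_1^0)$ arbitrarily (say $\bar z(t_1^0)=0$), and seek $\bar z$ as the solution of the linear ODE
\[
\dot{\bar z} = f'_z\,\bar z + f'_x\,\varkappa + f'_u\,\bar u,
\]
coupled with the algebraic relation
\[
g'_u\,\bar u = \dot\varkappa - g'_z\,\bar z - g'_x\,\varkappa,
\]
where all partial derivatives are evaluated along $w^0(t)$.

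First I would make the construction precise. Because $g'_u(t)$ is a nonzero row vector depending Lipschitz-continuously on $t$ (the data of $w^0$ are continuous on $\Delta_1,\Delta_3$ and Lipschitz on $\Delta_2$, and $u^0$ is Lipschitz on $\Delta_2$), its Moore--Penrose right inverse $g'_u{}^{+} = g'_u{}^{\!\top}\,(g'_u\,g'_u{}^{\!\top})^{-1}$ is well defined and bounded on $\Delta_2$. Define
\[
\bar u(t) := g'_u(t)^{+}\bigl(\dot\varkappa(t) - g'_z(t)\,\bar z(t) - g'_x(t)\,\varkappa(t)\bigr).
\]
This expresses $\bar u$ as a bounded measurable (indeed, once $\bar z$ is known, continuous) function of $\bar z$; substituting it into the $\bar z$-equation gives a linear ODE in $\bar z$ with bounded measurable coefficients on $\Delta_2$, which has a unique absolutely continuous solution on $\Delta_2$ for the chosen initial datum. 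With this $\bar z$, the formula above then defines $\bar u$, and by construction $\bar z,\bar x=\varkappa,\bar u$ satisfy both equations of \eqref{var_syst} on $\Delta_2$: the second equation holds because $g'_u\,g'_u{}^{+} = I$ (here the identity on $\R^1$), so $g'_u\,\bar u = \dot\varkappa - g'_z\bar z - g'_x\varkappa$, i.e. $\dot{\bar x} = g'_z\bar z + g'_x\bar x + g'_u\bar u$; the first equation holds by the very definition of $\bar z$.

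The only genuine point requiring care — the main (and rather mild) obstacle — is the regularity of $\bar u$: we need $\bar u$ measurable and bounded, which is exactly what the boundedness of $g'_u{}^{+}$, the continuity of $\bar z$, and the assumed Lipschitz continuity of $\varkappa$ (so that $\dot\varkappa \in L^\infty$) deliver. I should also note that the statement asks only for a solution \emph{on $\Delta_2$}, so no matching at $t_1^0,t_2^0$ with variations on $\Delta_1,\Delta_3$ is needed here; if desired, one extends $\bar w$ to all of $[0,T]$ afterwards by solving \eqref{var_syst} with $\bar u \equiv 0$ on $\Delta_1,\Delta_3$ from the endpoint values just obtained, but this is not part of the claim. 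This completes the construction and hence the proof. \ctd
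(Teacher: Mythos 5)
Your proposal is correct and follows essentially the same route as the paper: you set $\bar x=\varkappa$, take $\bar u$ proportional to $g'_u{}^{\top}$ (your Moore--Penrose right inverse $g'_u{}^{\top}/|g'_u|^2$ is exactly the paper's ansatz $\bar u = v\,g'_u$ with $v=(\dot\varkappa-g'_z\bar z-g'_x\varkappa)/|g'_u|^2$), substitute into the first equation to get a linear ODE for $\bar z$ with $\bar z(t_1^0)=0$, and then recover $\bar u$. The added remarks on boundedness of $\bar u$ and on not needing to match at the junction points are fine but do not change the argument.
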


\begin{proof}
	Let us set $\bar{x}(t) = \varkappa(t),\,$ $\bar{u}(t) = v(t)\, g'_u\,,$
	where $v(t)$ is a scalar function to be found. Since $g'_u(z^0,x^0,u^0)\ne 0,$
	from the second equation of system \eqref{var_syst} we obtain
	$
	v(t) =\, \big(\dot{\varkappa} - g'_z \bar{z} - g'_x \varkappa\big) /\, |g'_u|^2\,.
	$
	Substituting the corresponding $\bar{u}(t)$ into the first equation of
	system \eqref{var_syst}, we come to the following nonhomogeneous equation
	with respect to $\bar{z}:$
	$$
	\dot{\bar{z}}\, =\, f'_z \bar{z}\, - \dfrac{\langle g'_z,\bar{z}\rangle}{|g'_u|^2}\,  f'_u g'_u\, +\,
	\left(f'_x - \dfrac{g'_x}{|g'_u|^2}\, f'_u g'_u \right) \varkappa\, +\,
	f'_u \frac{\dot{\varkappa}}{|g'_u|^2}\, g'_u\,.
	$$
	Setting for definiteness $\bar{z}(t_1) = 0,$ we get the solution of
	this equation, and then define $v(t)$ and $\bar u(t).$ \qed
\end{proof}

Consider now any $\varkappa(t) > 0$ on $\Delta_2 =[t_1^0,\, t_2^0].$
By Lemma \ref{lemma_variation_1}, the system \eqref{var_syst} has a solution
$\bar w(t)= (\bar{z}(t), \bar x(t), \bar u(t))$ on $\Delta_2$ with
$\bar{x}(t) = \varkappa(t).$ To construct the corresponding process, which
will be compared with the optimal one $w^0,$ we have to go back to the
original nonlinear system $\dot z = f(z,x,u),$ \, $\dot x = g(z,x,u).$
Note that \eqref{var_syst} is the variational system for the latter one.
According to the main property of variational equation, for any $\e > 0$
there exists a correction $\tilde w_{\e} = (\tilde{z}_{\e}, \tilde{x}_e, \tilde{u}_e)$
with $||\tilde w_\e||_\infty \leq o(\e)$ as $\e \to 0+$ such that the
triple $w_\varepsilon = w^0 + \varepsilon \bar w + \tilde w_\varepsilon$
satisfies the original system on $\Delta_2.$ It is easy to verify that
this triple satisfies also conditions $x_\e(t)>0$ and $\varphi(u_\e)<0$
on $\Delta_2\,.$

Now, let us extend this triple, defined only on $\Delta_2\,,$ to a process
defined on the whole interval $[0,T].$ To do this, on $\Delta_1 =[0, t_1^0]$
we set $u_\e = u^0$ (i.e., $\bar{u} =0$) and solve the nonlinear system with
initial conditions $z_\e(t_1^0),\; x_\e(t_1^0).$ \mbox{On $\Delta_3\,,$}
we again set $u_\e = u^0\; (\bar{u} =0)$ and solve the nonlinear system with
the initial conditions $z_\e(t_2^0),\; x_\e(t_2^0).$ Thus, we get a process
$w_\e = (z_\e,\, x_\e,\, u_\e)$ on the whole interval $[0,T]$ that by definition
satisfies the constraint $\varphi(u_\e) \leq 0.\;$

Note that $\dfrac {d w_\e(t)}{d\e} = \bar{w}(t) = (\bar{z},\bar{x},\bar{u}),$
where $\bar{u}=0$ on $\Delta_1 \cup \Delta_3\,$ and $\bar u$ on $\Delta_2$
is the above function from Lemma \ref{lemma_variation_1}, satisfies the
linear system \eqref{var_syst} on $[0,T].$ In particular, the  pair
$\left(\bar{z} = \dfrac{d z_{\e}}{d \e},\;\,\bar{x} = \dfrac{d x_{\e}}{d \e}\right)$
satisfies on $\Delta_1 \cup \Delta_3$ the system of linear equations in variations
\beq{syst-zx}
\dot{\bar{z}} = f'_z \bar{z} + f'_x \bar{x}, \qquad
\dot{\bar{x}} = g'_z \bar{z} + g'_x \bar{x}\,.
\eeq

\begin{lemma}\label{xpol}\,
	$x_\varepsilon(t)>0$ on $\Delta_1 \cup \Delta_3$ for small $\varepsilon>0,$
	except the points $t_1^0,\, t_2^0\,.$
\end{lemma}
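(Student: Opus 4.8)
The goal is to show that the constructed process $w_\e$, which on $\Delta_2$ is a genuine admissible perturbation with $x_\e > 0$, stays in the interior of the state constraint on the outer intervals $\Delta_1$ and $\Delta_3$ as well (up to the endpoints $t_1^0,t_2^0$ of $\Delta_2$). The key point is that on $\Delta_1\cup\Delta_3$ the control is left unperturbed ($\bar u\equiv 0$), so $x_\e$ differs from $x^0$ only through the perturbed initial data at the junction times. The plan is to use the strict sign conditions \eqref{x12} at the junctions together with the strict positivity $x^0(t)>0$ on $\Delta_1\cup\Delta_3\setminus\{t_1^0,t_2^0\}$, and argue exactly as in the proof of Lemma 6.1 (the one showing $y_1,y_3>0$), but now in the original time $t$.

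\textbf{Step 1: behaviour near $t_1^0$ on $\Delta_1$.} By \eqref{x12}, $\dot x^0(t_1^0-0)=g(z^0(t_1^0),x^0(t_1^0),u^0(t_1^0-0))<0$, and by continuity of $g$ and of $u^0$ on $\Delta_1$ there exist $\delta>0$ and $c>0$ with $g(z,x,u)\le -c<0$ for all $(z,x,u)$ in a neighbourhood of the reference arc over $[t_1^0-\delta,\,t_1^0]$. Since $w_\e\to w^0$ uniformly on $\Delta_1$ (the correction is $o(\e)$ and $\bar u=0$ there, so $z_\e,x_\e$ depend continuously on $\e$ through the ODE and its perturbed terminal-at-$t_1^0$ data), for small $\e$ we get $\dot x_\e(t)=g(z_\e,x_\e,u^0)\le -c/2<0$ on $[t_1^0-\delta,t_1^0]$. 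Combined with $x_\e(t_1^0)>0$ (established on $\Delta_2$, where $x_\e(t_1^0)=x^0(t_1^0)+\e\,\bar x(t_1^0)+o(\e)=\e\varkappa(t_1^0)+o(\e)>0$), integrating backward gives $x_\e(t)>0$ on $[t_1^0-\delta,t_1^0)$, and in fact $x_\e(t)\ge x_\e(t_1^0)>0$ there.

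\textbf{Step 2: behaviour on the remaining part of $\Delta_1$.} On $[0,\,t_1^0-\delta]$ we have $x^0(t)>0$ on a compact set, hence $x^0(t)\ge b$ for some $b>0$; since $x_\e\to x^0$ uniformly on $\Delta_1$, for small $\e$ we get $x_\e(t)\ge b/2>0$ on $[0,t_1^0-\delta]$. Together with Step 1 this yields $x_\e(t)>0$ on all of $\Delta_1\setminus\{t_1^0\}$.

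\textbf{Step 3: the interval $\Delta_3$.} Argue symmetrically: by the second line of \eqref{x12}, $\dot x^0(t_2^0+0)>0$, so $g\ge c>0$ in a neighbourhood of the reference arc over $[t_2^0,\,t_2^0+\delta]$, hence $\dot x_\e\ge c/2>0$ there for small $\e$; with $x_\e(t_2^0)>0$ (from the $\Delta_2$ construction) this gives $x_\e(t)>0$ on $(t_2^0,t_2^0+\delta]$ by forward integration; and on $[t_2^0+\delta,T]$ one uses the uniform bound $x^0\ge b>0$ as in Step 2. Hence $x_\e(t)>0$ on $\Delta_3\setminus\{t_2^0\}$, which completes the proof. \qed

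\textbf{Main obstacle.} The only delicate point is the uniform convergence $w_\e\rightrightarrows w^0$ on $\Delta_1\cup\Delta_3$: one must note that the terminal data $z_\e(t_1^0),x_\e(t_1^0)$ (resp. the initial data at $t_2^0$) come from the $\Delta_2$-construction and satisfy $z_\e(t_1^0)\to z^0(t_1^0)$, $x_\e(t_1^0)\to x^0(t_1^0)$ as $\e\to 0$, so continuous dependence of solutions of the (Lipschitz) nonlinear ODE on initial conditions gives the required uniform closeness; everything else is a direct transcription of the argument already used for $y_1,y_3$ in the proof of Lemma 6.1.
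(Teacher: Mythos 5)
Your proof is correct, but on the delicate part (the neighbourhoods of the junction points) it runs on a different engine than the paper's. The paper does not invoke the entry/exit conditions \eqref{x12} in this lemma at all: it works with the first-order expansion $x_\e = x^0 + \e\bar x + o(\e)$ (uniform in $t$), observes that $\bar x(t_2^0)=\varkappa(t_2^0)=c>0$ forces $\bar x\ge c/2$ on some $[t_2^0,\,t_2^0+\delta]$, and since $x^0\ge 0$ there concludes $x_\e\ge \e c/3$; the part away from the junctions (uniform lower bound $x^0\ge b>0$ plus uniform convergence $x_\e\rightrightarrows x^0$) is identical in both proofs. You instead transplant the mechanism of the lemma in Section 5: \eqref{x12} gives a strict sign on $g$ along the reference arc near each junction, hence strict monotonicity of $x_\e$ there for small $\e$, so $x_\e$ dominates its value at the junction point, which is positive by the $\Delta_2$-construction. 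Both arguments are sound, and the order of quantifiers ($\delta,c$ fixed from $w^0$ first, then $\e$ small) is handled correctly in your write-up. What each buys: your route needs only uniform convergence of $w_\e$ on the outer arcs (not the sharper $o(\e)$ estimate there) and reuses an argument the reader has already seen, at the price of consuming hypothesis \eqref{x12}; the paper's route is hypothesis-free at this step, relying only on $\varkappa>0$ at the endpoints and $x^0\ge 0$, so it would survive tangential entry or exit. Your closing remark on continuous dependence with respect to the perturbed data at $t_1^0,\,t_2^0$ correctly identifies the one point that must be checked for the uniform convergence you use.
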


\begin{proof} Define $\zeta(t) = \bar z(t)$ on $\Delta_2$ and consider the interval
	$\Delta_3.$ On this interval, the pair $(z_{\e}, x_{\e})$ satisfies the
	same nonlinear system as the pair $(z^0,x^0),$ but with the corrected initial
	conditions $z_{\e}(t_2^0),\, x_{\e}(t_2^0).$ Then, the pair $(\bar z, \bar x)$
	satisfies the linear system \eqref{syst-zx} with initial conditions
	$\bar{z}(t_2^0) = \zeta(t_2^0),$ $\bar{x}(t_2^0) = \varkappa(t_2^0).$
	Since $c:= \varkappa(t_2^0) > 0,$ there exists such $\delta > 0$ that
	$\bar{x}(t) \ge c/2$ on $[t_2^0,\, t_2^0 + \delta].$ Then
	$x_{\e}(t) \ge \e c/ 3$ on this interval for small enough $\e > 0.$
	\smallskip
	
	Since $x^0(t) \ge \const > 0$ on $[t_2^0 + \delta, T],$ we get
	$x_{\varepsilon}(t) > 0$ for small $\varepsilon > 0.$
	Thus, $x_{\varepsilon}(t) > 0$ on the whole $\Delta_3 \setminus \{t_2^0\}.$
	The interval $\Delta_1$ is considered similarly. \qed
\end{proof}

Thus, the constructed process $w_\varepsilon$ satisfies all the constraints
of problem \eqref{problem_A} and, since  the process $w^0$
provides the weak minimality, we have
\begin{equation}\label{DJ}
	\frac d{d\varepsilon}\, J(\tilde w_\varepsilon)\Bigr|_{\varepsilon=0}\; =\; J'(w^0)\, \bar{w}\; \ge\; 0.
\end{equation}

Let us apply Lemma \ref{lemma_variation} to the constructed triple $(\bar{z}, \bar{x}, \bar{u})$
and functions $\psi_x,\, m,\, h$ defined in \eqref{new_m}--\eqref{trat}.\,
Since $m=0$ on $\Delta_1 \cup \Delta_3\,,$ the first two integrals in
\eqref{lemma_res} disappear, and since $h=0$ on $\Delta_2$ and  $\bar{u}=0$
on $\Delta_1 \cup \Delta_3\,,$ the last integral disappears too.
According to transversality conditions \eqref{trat}, the left hand part of relation
\eqref{lemma_res} is exactly
$$
-\Bigl(J_{z(0)}\bar z(0) + J_{z(T)}\bar z(T)+ J_{x(0)}\bar x(0) + J_{x(T)}\bar x(T)\Bigr)
\;=\, -J'(w^0)\,\bar{w},
$$
hence  \vadjust{\kern-8pt}
\begin{multline}\label{dJ}
	J'(w^0)\,\bar{w}\; = {} \\ {} =
	\Bigl(-\Delta \psi_x(t_1^0) + m(t_1^0 + 0) \Bigr)\, \bar{x}(t_1^0)\, -\,
	\Bigl(\Delta \psi_x(t_2^0) + m(t_2^0 - 0) \Bigr)\, \bar{x}(t_2^0)\, +
	{} \\ {} + \int_{\Delta_2} \dot{m}\, \bar{x}\, dt \; \ge\; 0. \phantom{rrr}
\end{multline}

This inequality holds for any Lipschitz continuous function $\bar{x}(t) = \varkappa(t) > 0$
on $\Delta_2.$ Now, take any $\varkappa(t) \ge 0$ on $\Delta_2\,.$
Approximating it uniformly by functions $\varkappa(t) > 0$ and passing to
the limit in \eqref{dJ}, we obtain that inequality \eqref{dJ} holds for
any Lipschitz continuous function $\varkappa(t) \ge 0$ on $\Delta_2\,.$
Considering only $\varkappa(t)$ with zero values at the endpoints of $\Delta_2,$
we get $\int_{\Delta_2} \dot{m}\, \bar{x}\, dt  \ge 0,$ which implies
$\dot m(t) \ge 0$ almost everywhere on $\Delta_2\,.$ \smallskip

Consider now functions $\varkappa(t) \ge 0$ that vanish on
$[t_1^0 +\delta,\,t_2^0]$ for a small $\delta>0$ and satisfy $\varkappa(t) \leq 1$
with $\varkappa(t_1^0)=1.$ If $\delta \to 0+\,,$ the integral in the right
hand side of \eqref{dJ} tends to zero, thus $-\Delta \psi_x(t_1^0) + m(t_1^0 + 0) \ge 0.$
In view of equality $\Delta\psi_x(t_1^0) = - \alpha_{1},$ we get
$\alpha_1 + m(t_1^0 + 0) \ge 0.$ Similarly, we  get $ - m(t_2^0 - 0) \ge 0$
in view of continuity of $\psi_x$ at the point $t_2^0\,.$ \smallskip

Thus, we have proved the following

\begin{lemma}\label{lemma_min}
	Let the process $w^0$ provide the extended weak minimality in problem~A.
	Then $\dot{m}(t) \ge 0$ on $\Delta_2$ (i.e., $m(t)$ decreases on $\Delta_2);$
	moreover, $\alpha_1 + m(t_1^0 + 0) \ge 0$ and  $- m(t_2^0 - 0) \ge 0.$
\end{lemma}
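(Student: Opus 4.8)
The statement is, in essence, a \emph{summary} of the construction carried out just above, so the plan is to feed the family of variations concentrated on $\Delta_2$ into the (extended) weak-minimality inequality and then read off the three sign conditions by choosing the free function $\varkappa$ appropriately.

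Concretely: fix a Lipschitz $\varkappa(t)>0$ on $\Delta_2$; by Lemma~\ref{lemma_variation_1} there is a solution $(\bar z,\bar x,\bar u)$ of \eqref{var_syst} on $\Delta_2$ with $\bar x=\varkappa$, which I extend to $[0,T]$ by $\bar u\equiv 0$ on $\Delta_1\cup\Delta_3$ and correct to a genuine admissible process $w_\e=w^0+\e\bar w+\tilde w_\e$ of problem~A, the admissibility on $\Delta_1\cup\Delta_3$ being guaranteed by Lemma~\ref{xpol}. Extended weak minimality of $w^0$ (in particular, usual weak minimality) then yields $J'(w^0)\bar w\ge 0$, i.e.\ \eqref{DJ}. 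Applying Lemma~\ref{lemma_variation} to the functions $\psi_z,\psi_x,m,h$ of Section~\ref{section_7}, and using that $m\equiv 0$ on $\Delta_1\cup\Delta_3$, $h\equiv 0$ on $\Delta_2$, $\bar u\equiv 0$ off $\Delta_2$, together with the transversality relations \eqref{trat}, collapses \eqref{DJ} into the inequality \eqref{dJ}, which (with $\bar x=\varkappa$) reads
\[
\bigl(-\Delta\psi_x(t_1^0)+m(t_1^0+0)\bigr)\,\varkappa(t_1^0)-\bigl(\Delta\psi_x(t_2^0)+m(t_2^0-0)\bigr)\,\varkappa(t_2^0)+\int_{\Delta_2}\dot m\,\varkappa\,dt\;\ge\;0;
\]
a uniform approximation argument then extends its validity to every Lipschitz $\varkappa\ge 0$ on $\Delta_2$.

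The three conclusions are obtained by localization. Choosing $\varkappa\ge 0$ vanishing at both endpoints of $\Delta_2$ kills the two boundary terms, leaving $\int_{\Delta_2}\dot m\,\varkappa\,dt\ge 0$ for all such $\varkappa$, whence $\dot m\ge 0$ a.e.\ on $\Delta_2$. Choosing $\varkappa\ge 0$ supported in $[t_1^0,t_1^0+\delta]$ with $\varkappa(t_1^0)=1$ and $\varkappa\le 1$, and letting $\delta\to 0+$, makes the integral and the $t_2^0$-boundary term vanish, so $-\Delta\psi_x(t_1^0)+m(t_1^0+0)\ge 0$; since \eqref{psi_x_jumps} gives $\Delta\psi_x(t_1^0)=-\alpha_1$, this is $\alpha_1+m(t_1^0+0)\ge 0$. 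Symmetrically, concentrating $\varkappa$ near $t_2^0$ and using the continuity of $\psi_x$ there ($\Delta\psi_x(t_2^0)=0$) gives $-m(t_2^0-0)\ge 0$.

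Since \eqref{var_syst}, Lemmas~\ref{lemma_variation}--\ref{xpol}, and \eqref{dJ} are already available, the only remaining work is the bookkeeping of the localized test functions $\varkappa$ and the passage $\delta\to0$. The genuinely delicate point — already settled in Lemma~\ref{xpol} — is that the perturbed trajectory stays strictly positive on $\Delta_1\cup\Delta_3$ for small $\e$, and this is exactly where the strict transversality of landing and leaving in \eqref{x12} enters; without it one could not guarantee that the competing processes $w_\e$ are admissible in problem~A.
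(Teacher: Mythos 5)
Your proposal reproduces the paper's own argument: the same variation $\bar x=\varkappa$ on $\Delta_2$ built via Lemma \ref{lemma_variation_1}, extension by $\bar u=0$ off $\Delta_2$ with admissibility secured by Lemma \ref{xpol}, reduction of $J'(w^0)\bar w\ge 0$ to \eqref{dJ} through Lemma \ref{lemma_variation}, the approximation from $\varkappa>0$ to $\varkappa\ge 0$, and the same localization of $\varkappa$ at interior points and at the two junction points. It is correct and essentially identical to the proof in Section \ref{section_8}.
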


Let us get back to the function $\widetilde{\psi}_x = \psi_x -m$ having the jumps \eqref{pxm}. \\
To ``equalize'' these jumps, we introduce the function
\begin{equation}\label{state_multiplier}
	\mu (t) =
	\begin{aligned}
		\begin{cases}
			\begin{aligned}
				-\alpha_1 - m(t_1^0 + 0)  & \;\; \mbox{ on }\, \Delta_1,\\[2pt]
				m(t) - m(t_1^0 + 0) & \;\; \mbox{ on }\, \Delta_2,\\[2pt]
				- m(t_1^0 + 0) & \;\; \mbox{ on }\, \Delta_3.
			\end{aligned}
		\end{cases}
	\end{aligned}
\end{equation}
Then, according to Lemma \ref{lemma_min},
$$
\Delta \mu(t_1^0) = \alpha_1 + m(t_1^0 + 0) \ge 0, \qquad
\Delta \mu(t_2^0) = - m(t_2^0 - 0) \ge 0,
$$
and $\dot \mu(t) \ge 0$ for $t \ne t_1^0,\;\, t \neq t_2^0.$ The jumps of
adjoint variable $\widetilde{\psi}_x$ at junction points have now a ``symmetric''
form: $\;\Delta \widetilde{\psi}_x(t_i^0) =\, - \Delta \mu(t_i^0),$ $i=1,2.$
The adjoint equation for $\widetilde{\psi}_x$ (see \eqref{adjoint_sys} now
looks as follows:
$$
\dot{\widetilde{\psi}}_x = -\psi_z f'_x(z^0, x^0, y^0) +
\widetilde{\psi}_x g'_x(z^0, x^0, u^0)) - \dot \mu(t), \qq t\in [0,T],
$$
where $\dot\mu$ is the derivative in the sense of generalized functions.
This equation  {should} be regarded as an equality between measures:
$$
d \widetilde{\psi}_x = - \left(\psi_z f'_x(z^0, x^0, y^0) +
\widetilde{\psi}_x g'_x(z^0, x^0, u^0)\right) dt - d \mu(t), \q t\in [0,T].
$$

\section{The Final Result}\label{section_9}
{We now summarize our findings:}

\begin{theorem}\label{stat}
	Let $w^0(t) = \left(z^0(t), x^0(t), u^0(t)\right)$ be an admissible process
	in problem A
	such that $x(t) = 0$ on $\Delta_2 = [t^0_1,t^0_2],$ $x(t) > 0$ on
	$[0,T]\setminus \Delta_2,$ $\varphi_i(u^0(t)) < 0$ on $\Delta_2\,,$
	assumption \eqref{x12} holds, and let this process provide the extended weak minimality.
	Then there exist a Lipshitz continuous function $\psi_z(t),$ a constant $c,$
	functions $\widetilde{\psi}_x(t)$ and $\mu(t)$ Lipschitz continuous on each interval
	$\Delta_i,\;\, i=1,2,3,$ with possible jumps at $t^0_1,\,t^0_2\,,$ and a
	measurable bounded function $h(t),$ which generate the Pontryagin function
	$$
	H(z, x, u) =\; \psi_z f(z, x) + \widetilde{\psi}_x\, g(z, x, u),
	$$
	and the extended Pontryagin function
	$$
	\overline{H} =\; \psi_z f(z, x, u) +
	\widetilde{\psi}_x\, g(z, x, u) + \dot\mu x - h \varphi(u),
	$$
	such that the following conditions hold:
	
	\begin{enumerate}[(a)]
		\item non-negativity conditions
		\begin{equation}\label{nonnegativity_cond}
			\dot{\mu}(t)\;\ge 0\;\mbox{ a.e.\, on }\, \Delta_2,
			\quad h(t) \ge 0 \; \mbox{ a.e.\, on }\, [0,T],
		\end{equation}
		
		\item complementary slackness
		\begin{equation}\label{comp_slack_cond}
			d\mu(t)x^0(t) = 0, \quad\; h(t) \varphi(u^0(t)) \;
			\mbox{ a.e.\, on } [0,T],
		\end{equation}
		
		\item  adjoint equations
		\begin{equation}\label{adjoint_eq_5}
			\begin{cases}
				\begin{aligned}
					\dot \psi_{z} & =\; - \psi_z f'_z(z^0, x^0, u^0) - \widetilde{\psi}_x g'_z(z^0, x^0, u^0),\\
					\dot {\widetilde{\psi}}_x & =\; - \psi_z f'_x(z^0, x^0, u^0) - \widetilde{\psi}_x g'_x(z^0, x^0, u^0) - \dot\mu,
				\end{aligned}
			\end{cases}
		\end{equation}
		
		\item transversality conditions
		\begin{equation}\label{trans_cond_5}
			\begin{cases}
				\begin{aligned}
					\psi_z(0) &= J'_{z(0)}, &\qquad \psi_z(T) &= - J'_{z(T)},\\
					\psi_x(0) &= J'_{x(0)}, &\qquad \psi_x(T) &= - J'_{z(T)},\\
				\end{aligned}
			\end{cases}
		\end{equation}
		
		\item  jumps conditions for the adjoint variable $\widetilde{\psi}_x$
		\begin{equation}\label{jump_cond}
			\Delta\widetilde{\psi}_x(t_1^0) = - \Delta\mu(t_1^0) \leq 0,
			\qquad \Delta\widetilde{\psi}_x(t_2^0) = - \Delta\mu(t_2^0) \leq 0,
		\end{equation}
		
		\item the energy conservation law
		\begin{equation}\label{energy_conservation}
			H(z^0(t),x^0(t),u^0(t)) = c,
		\end{equation}
		
		\item stationarity condition w.r.t. control
		\begin{equation}\label{stat_cond}
			\overline{H}'_u\left(z^0(t),x^0(t),u^0(t)\right) = 0\quad
			\mbox{ a.e. on }\; [0,T].
		\end{equation}
	\end{enumerate}
\end{theorem}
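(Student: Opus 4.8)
The plan is to assemble the pieces already established in Sections~\ref{section_6}--\ref{section_8}: almost every clause of the theorem has been proved along the way, so the proof reduces to collecting the conclusions and renaming the objects so that they match the Dubovitskii--Milyutin form. The only genuinely new ingredient is the non-negativity statement, which comes from the feasible variations on the boundary interval.

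First I would recall that, by the lemma of Section~5, the extended weak minimality of $w^0$ in problem~A yields the classical weak minimality of the associated process $\gamma^0$ in problem~B. Since the mixed control-state constraints \eqref{probl_2_3} are regular along $\gamma^0$ (as checked in Section~\ref{section_6}), the standard stationarity conditions \eqref{nodegenerativity}--\eqref{stationarity_rho} hold, and by the second lemma of Section~\ref{section_6} one may normalize $\alpha_0 = 1$. Passing back from the new time $\tau$ to $t$ exactly as in Section~\ref{section_7} --- i.e. introducing $m(t)$ and $h(t)$ via \eqref{new_m}, the Lipschitz adjoint variables $\psi_z,\psi_t$ and the piecewise-Lipschitz $\psi_x$, and then setting $\widetilde\psi_x = \psi_x - m$ --- produces the adjoint system \eqref{adjoint_sys} together with the extended Pontryagin function \eqref{new_H} (clause (c)); the transversality conditions \eqref{trat} (clause (d)); the identity \eqref{cons_law}, equivalent to $H(z^0,x^0,u^0)\equiv \const$ with $c=-\psi_t$ (clause (f)); the stationarity condition $\overline H'_u=0$ (clause (g)); and the complementary slackness relations for the control constraint and for the state constraint, the latter because $x^0\equiv 0$ on $\Delta_2$ while $\dot m\equiv 0$ off $\Delta_2$. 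At this stage the jumps of $\widetilde\psi_x$ are only known to be $-\alpha_1-m(t_1^0+0)$ and $m(t_2^0-0)$, with no control on their signs.

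Next I would feed in the non-negativity analysis of Section~\ref{section_8}. Using the variational identity of Lemma~\ref{lemma_variation} applied to the family of feasible processes $w_\varepsilon = w^0+\varepsilon\bar w + \widetilde w_\varepsilon$ constructed from an arbitrary Lipschitz $\varkappa\ge 0$ on $\Delta_2$ (Lemmas~\ref{lemma_variation_1} and \ref{xpol} guarantee admissibility), together with the first-order inequality $J'(w^0)\bar w\ge 0$ coming from weak minimality, one obtains \eqref{dJ}; specializing $\varkappa$ to vanish at the endpoints of $\Delta_2$ gives $\dot m\ge 0$ a.e. on $\Delta_2$, and letting the support of $\varkappa$ shrink to $t_1^0$ (resp. $t_2^0$) gives $\alpha_1+m(t_1^0+0)\ge 0$ and $-m(t_2^0-0)\ge 0$; this is Lemma~\ref{lemma_min}. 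Introducing $\mu(t)$ by \eqref{state_multiplier} then symmetrizes everything: $\Delta\mu(t_1^0)=\alpha_1+m(t_1^0+0)\ge 0$, $\Delta\mu(t_2^0)=-m(t_2^0-0)\ge 0$, $\dot\mu\ge 0$ a.e. off the junction points, and $\Delta\widetilde\psi_x(t_i^0)=-\Delta\mu(t_i^0)$, which is clause (e); combined with $\dot m\ge 0$ and $h\ge 0$ (from \eqref{nonnegativity}) this yields clause (a), while $d\mu\,x^0=0$ follows from $x^0\equiv 0$ on $\Delta_2$ and $\mu=\const$ off $\Delta_2$, which is clause (b). Rewriting the adjoint equation for $\widetilde\psi_x$ with the measure $d\mu$ in place of $\dot m\,dt$ gives the final form displayed just before the theorem, completing the list.

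The main obstacle --- the only place where something has to be genuinely proved rather than relabeled --- is the non-negativity step: one must check that the variations $\bar x=\varkappa>0$ on $\Delta_2$ are realized by honestly admissible processes of problem~A, so that weak minimality applies to them, and that in the cost differential nothing survives besides the three boundary/measure terms of \eqref{dJ}. This is precisely the content of Lemmas~\ref{lemma_variation}--\ref{lemma_min}; once they are in hand, the assembly above is routine, and in particular no separate nontriviality argument is needed since $\alpha_0=1$.
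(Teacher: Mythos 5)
Your proposal is correct and follows essentially the same route as the paper: the theorem is proved there precisely by assembling Lemma 5.1 (reduction to weak minimality in problem B), the stationarity conditions and normalization $\alpha_0=1$ of Section~\ref{section_6}, the change of time and the substitution $\widetilde\psi_x=\psi_x-m$ of Section~\ref{section_7}, and the two-stage variational argument of Section~\ref{section_8} culminating in Lemma~\ref{lemma_min} and the definition \eqref{state_multiplier} of $\mu$. You correctly identify the non-negativity step as the only genuinely new ingredient beyond relabeling.
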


\begin{remark}\, Note again that theorem \ref{stat} is not new;\,
	in fact, it is the stationarity conditions in the Dubovitskii--Milyutin's
	form with some refinements for our specific problem A. The novelty is
	only in the way of obtaining this result.
\end{remark}

\begin{remark}\, If the functions $\varphi_s(u),\; s=1, \ldots, d(\f)$
	are convex  and the function $H(z^0, x^0, u)$ turns out to be concave in $u,$ then,
	as is known, stationarity condition \eqref{stat_cond} is equivalent
	to the maximality condition over the set
	$U= \{ u\;|\;\varphi_s(u) \le 0,$  $s=1, \ldots, d(\f) \}:$
	\begin{equation}\label{max_cond}
		H\left(z^0(t),x^0(t),u^0(t)\right)\, =\;
		\max_{v\in U}H(z^0(t),x^0(t),v)\;\; \mbox{ for a.a. }\;t,
	\end{equation}
	i.e., the necessary conditions for the extended weak minimality and for the
	strong minimality are equivalent. However, if the cost $J$ is not convex,
	then neither strong, nor even weak minimality can be guaranteed.
\end{remark}

\begin{remark}
	Note that, in the proof of theorem \ref{stat}, the variation of the reference
	process are made in two stages, not in one, as usual.
	First, we use not the whole class of possible variations, but only those
	for which $\bar x= \mbox{const}$ on the boundary interval $\Delta_2\,.$
	In the second stage, we consider the stationarity conditions obtained for
	this reduced class, and substitute to them the ``remaining'' variations
	$\bar x \ge 0$ concentrated inside the boundary interval $\Delta_2$ and
	near its endpoints, which makes it possible to specify these conditions.
	This approach might be feasible not only for the given class of problems,
	but also for some other problems (see, e.g. Sec. \ref{section_12}--\ref{section_14}
	below).
\end{remark}

\section{On the Jumps of Measure -- the Multiplier at the State Constraint }\label{section_10}
Of special interest is the question, in which cases the adjoint variable
$\widetilde{\psi}_x(t)$ and the function $\mu(t)$ generating the measure do not
have jumps at junction points?\, Studies show (see, e.g. the book
\cite[\S{6}]{ADMC} or papers \cite{HSV,M77,LTJW,BdV,Pin-Sch,A-2016})
that in case of strong (or at least Pontryagin type \cite{MO,MDO}) minimality,
the adjoint variable and measure do not have jumps under condition \eqref{x12}.
However, this result is not, in general, valid in the case of extended weak
minimality (the reason is that one cannot rely upon the maximality of Pontryagin
function w.r.t. $u,$ having in disposal only the stationarity of the extended
Pontryagin function). Here, we specify a class of problems where the adjoint
variable and measure have no jumps, and also present an example where
the adjoint variable and measure corresponding to a stationary (but not
optimal) trajectory do have nonzero jumps at junction points.

\subsection{On the Absence of Atoms of Measure}
Consider the case when the dynamics of the ``free'' state variable $z$
does not depend on $u:\;$ $\dot z  = f(z,x).$
Applying \eqref{stat_cond} to the interval $\Delta_2\,,$ we get
$$
\overline{H}'_u =\; \widetilde{\psi}_x\, g'_u(z^0, x^0, u^0) = 0,
$$
whence, in view of assumption $g'_u(z^0, x^0, u^0) \ne 0,$ obtain
$\widetilde{\psi}_x \equiv 0$ on $\Delta_2\,.$ Thus,
$ \widetilde{\psi}_x (t_1-0) + \Delta\widetilde{\psi}_x (t_1) =
\widetilde{\psi}_x (t_1+0) =0,
$
and so $ \Delta\widetilde{\psi}_x (t_1) = -\widetilde{\psi}_x (t_1-0).$
\smallskip

According to the energy conservation law \eqref{energy_conservation}, the jump
of the so-called switching function (the $u$-dependent term of Pontryagin function)
at the point $t_{1}$ is zero:
$$
0 = \Delta \left(\widetilde{\psi}_x\, g\right)(t_1) =\,
\widetilde{\psi}_x(t_1+0)\, g(t_1 + 0) -
\widetilde{\psi}_x (t_1 - 0)\, g(t_1-0) = \Delta\widetilde{\psi}_x (t_1)\, g(t_1-0),
$$
where $g(t_1 \pm 0) := g\left(z(t_1), x(t_1), u(t_1 \pm 0)\right) \ne 0$
according to \eqref{x12}, and therefore $\Delta \widetilde{\psi}_x (t_1) = 0.$
One can similarly show that $\Delta \widetilde{\psi}_x (t_2) = 0$ either.
\smallskip

Thus, in the considered case, the measure has no atoms, and the adjoint variables
are continuous. In the general case, the question of presence or absence of atoms
is open. We leave it for further research. \smallskip

\subsection{An Example Where the Measure Has Atoms}

Consider the following problem:
\begin{equation}\label{primer}
	\begin{cases}
		\begin{aligned}
			\dot z & = f(u),\qquad  u^2 - 1 \leq 0, \\
			\dot x & = u, \qquad \qquad x \ge 0, \\
			J& = z(0) - z(3) +\, a\left(x(0) + x(3)\right)\to \min,
		\end{aligned}
	\end{cases}
\end{equation}
where $z,\, x,\,u \in \R,$ and a parameter $a>0$ is arbitrary.
Let $\Delta_1 = [0,1],$ $\Delta_2 = [1,2],$ $\Delta_3 = [2,3].$
Consider a trajectory generated by the control $u=(-1,0,1)$ on
$\Delta_1, \Delta_2, \Delta_3,$ for which $x = (1-t,\; 0,\; t-2)$
on $\Delta_1, \Delta_2, \Delta_3,$ respectively. The value of $z$
is defined up to an additive constant, which does not matter.

Let this trajectory satisfy the stationarity conditions of Theorem \ref{stat},
i.e., let there exist Lipschitz continuous function $\psi_z,$ Lipschitz
continuous on $\Delta_1,\Delta_2, \Delta_3$ functions $\psi_x$ and $\mu$ with
possible jumps at $t_1=1$ and $t_2=2,$ a constant $c,$ and a measurable bounded
function $h,$ which generate the Pontryagin function $H = \psi_z f(u) + \psi_x u$
and the extended Pontryagin function
$$
\overline{H} = \psi_z f (u) + \psi_x u +\, \dot{\mu}\, x - h(u^2 -1),
$$
such that the following condition hold:
\begin{enumerate}[(a)]
	\item adjoint equations
	\begin{equation}\label{ex_adjoint}
		\dot{\psi}_z = 0, \qquad \dot{\psi}_x = - \dot{\mu},
	\end{equation}
	\item transversality conditions
	\begin{equation}\label{ex_transvers}
		\begin{cases}
			\begin{aligned}
				\psi_z(0) &= J'_{z(0)} =1, \qquad &\psi_z(3) &= -J'_{z(T)} =1,\\[4pt]
				\psi_x(0) &= J'_{x(0)} =a , \qquad &\psi_x(3) &= -J'_{x(T)} = -a ,
			\end{aligned}
		\end{cases}
	\end{equation}
	\item complementary slackness conditions
	\begin{equation}\label{ex_slackness}
		\dot{\mu}\, x = 0, \qquad h\,(u^2 - 1) = 0,
	\end{equation}
	\item stationarity conditions w.r.t. control
	$$
	\overline{H}'_u = 0\; \Longleftrightarrow \;
	\begin{cases}
	\begin{aligned}
	\psi_z f'(-1) + \psi_x &= 2 h u, &\; \mbox{on }\; \Delta_1,\\[3pt]
	\psi_z f'(0) + \psi_x &= 0, &\;  \mbox{on }\; \Delta_2,\\[3pt]
	\psi_z f'(1) + \psi_x &= 2 h u, &\,  \mbox{on }\; \Delta_3\,,\\
	\end{aligned}
	\end{cases}
	$$
	that imply the adjoint variable to be as follows
	\begin{equation}\label{psi_x}
		\psi_x=
		\begin{cases}
			\begin{aligned}
				-2 h - \psi_z f'(-1),\quad  \mbox{on }\; \Delta_1,\\[3pt]
				- \psi_z f'(0),\quad  \mbox{on }\; \Delta_2,\\[3pt]
				2 h - \psi_z f'(1),\quad  \mbox{on }\; \Delta_3,\\
			\end{aligned}
		\end{cases}
	\end{equation}
	\item and the energy conservation law
	\begin{equation}
		H = c\; \Longleftrightarrow \;
		\begin{cases}
			\begin{aligned}
				\psi_z f(-1) + 2 h - \psi_z f'(-1) &= c, &\;  \mbox{on }\; \Delta_1,\\[3pt]
				\psi_z f(0) &= c, &\; \mbox{on }\; \Delta_2,\\[3pt]
				\psi_z f(1) + 2 h - \psi_z f'(1) &= c, &\;  \mbox{on }\; \Delta_3\,.
			\end{aligned}
		\end{cases}
	\end{equation}
\end{enumerate}
From \eqref{ex_adjoint}--\eqref{ex_transvers} it follows that $\psi_z \equiv 1.$
Set  $ h = (1,\,0,\,1)$ on $\Delta_1,\Delta_2, \Delta_3\,.$ The complementary
slackness conditions are then obviously hold. Thus, according to \eqref{psi_x}, we get
\begin{equation}\label{[ex_psi_x]}
	\psi_x =
	\begin{cases}
		\begin{aligned}
			-2 - f'(-1), \quad & \; \mbox{on }\;  \Delta_1,\\[2pt]
			- f'(0), \quad & \; \mbox{on }\; \Delta_2,\\[2pt]
			2 - f'(1), \quad & \; \mbox{on }\; \Delta_3,
		\end{aligned}
	\end{cases}
\end{equation}
while the energy conservation law reads as follows:
\begin{equation}\label{ex_stat}
	\begin{cases}
		\begin{aligned}
			f(0) &= f(-1) + 2 - f'(-1),\\[3pt]
			f(0) &= f(1) + 2 - f'(1).
		\end{aligned}
	\end{cases}
\end{equation}
Conditions \eqref{ex_stat} are definitely satisfied if, e.g., $f$ is such that
\begin{equation}\label{ex_f}
	\begin{cases}
		\begin{aligned}
			f(-1) &= a , & \q f'(-1) &= - 2 - a, \\[2pt]
			f(0) &= 0, & f'(0) &= 0,\\[2pt]
			f(1) &= a ,  & f'(1) &= 2 + a.
		\end{aligned}
	\end{cases}
\end{equation}
Then, the transversality conditions \eqref{ex_transvers} hold too,
and the jumps of $\psi_x$ at the points $1$ and $2$ are
\begin{equation*}
	\begin{aligned}
		\Delta \psi_z(1) &=\; 2+ \left(f'_u(-1) - f'_u(0)\right) =\; - a  < 0,\\[4pt]
		\Delta \psi_z(2) &=\; 2+ \left(f'_u(0) - f_u'(1)\right) =\; - a  < 0.
	\end{aligned}
\end{equation*}

Now, it remains to find a smooth function $f$ satisfying conditions \eqref{ex_f}.\\
To this purpose one can use, e.g. the following polynomial:
$$
f(u) = \Bigl(1 - \frac{a }{2}\Bigr)u^4 +
\Bigl(\frac{3a }{2} -1\Bigr) u^2.
$$
Thus, we get a stationary trajectory for which the adjoint variable $\psi_x$
has jumps $-a $ at the points $t = 1,\,2.$ (Choosing a corresponding
$f,$ one can make these jumps not equal.) \smallskip

Note that here the Pontryagin function $H$ is not concave in $u$ (on the
contrary, it is convex), so the stationarity conditions w.r.t. control
$\overline{H}'_u = 0$ does not ensure the maximum of $H,$ i.e., the reference
trajectory does not satisfy the maximum principle, and hence, it is just
stationary but does not provide the strong minimality.

Thus, the stationarity conditions do not guarantee the absence of atoms,
while, according to \cite{HSV,M77,ADMC,LTJW,BdV,Pin-Sch,A-2016}),
the maximum principle does. If a trajectory is not just stationary, but
provides the strong (or at least Pontryagin type) minimality, then it
satisfies the maximum principle, and therefore, the corresponding measure
cannot have atoms.
\bigskip

\section{An Example Where the Measure Has a Negative Density}\label{section_11}

Let us present an example showing that the condition of non-negativity of the measure
density is essential, i.e., it does not follow from other stationarity
conditions. Consider the following problem:
\begin{equation}\label{sample_1}
	\begin{cases}
		\, z_1(T) + \left(z_1(0) - \widehat{z}_1\right)^2 + \left(z_2(0) - \widehat{z}_2 \right)^2 + \\[3pt]
		\phantom{rrrrrrrrrrrr} +\left(x(0) - \widehat{x}_0\right)^2 + \left(x(T) - \widehat{x}_T\right)^2 \to\min,\\[3pt]
		\begin{aligned}
			\dot z_1 &= (z_2 - a)(z_2 - b) x, \qquad \dot z_2 = 1,\\[3pt]
			\dot x &= u, \qq\q x \geq 0, \qquad\;\; |u| \leq 1.
		\end{aligned}
	\end{cases}
\end{equation}
Here $z = (z_1, z_2)\in \R^2,$ the parameters $0 < a < b < T$ are fixed, while the
parameters $\widehat{z}_1,\, \widehat{z}_2,\; \widehat{x}_{0},\, \widehat{x}_{T}$
are also fixed and will be defined below. The function $f =(f_1,f_2)= ((z_2- a)(z_2 -b)x, \,1)),$
thus $f'_x = ((z_2 - a)(z_2 - b),\,0)).$ The endpoints of the trajectory are free. \smallskip

Consider a trajectory with $u^0 = (-1, 0, 1)$ on the intervals $[0,a],$ $[a,b],$
$[b,T],$ respectively, $z_1^0(0) = 0,$ $z_2^0(t) \equiv t,$ and $x^0(t) = 0$
on $[a,b].$ Thus, $x^0(t) = a - t> 0$ for $t < a$ and $x^0(t) = t - b$ for $t > b.$
Check, whether stationarity conditions \eqref{adjoint_eq_5}--\eqref{stat_cond} hold.
\smallskip

The extended Pontryagin function is
$$
\overline{H} = \psi_{z_1} (z_2 - a) (z_2 - b) x + \psi_{z_2} +
\widetilde{\psi}_x u + \dot \mu x - h (u^2 - 1),
$$
where, in view of the complementary slackness conditions,
$\dot\mu =0$ outside of $[a,b],$ and $h=0$ on $[a,b].$

The adjoint equations and transversality conditions are as follows:
\begin{equation}\label{samp_1_adj_sys}
	\begin{cases}
		\q \dot{\psi}_{z_1} = 0, \\[3pt]
		\q \dot{\psi}_{z_2} = - \psi_{z_1}\left(2 z^0_2 - a - b\right)x^0, \\[3pt]
		\q \dot{\widetilde{\psi}}_x = -\psi_{z_1} (z^0_2 - a) (z^0_2 - b) - \dot\mu,  \\[4pt]
		\;\psi_{z_1}(0) = 2 (z^0_1(0) - \widehat{z}_1), \qq
		\;\,\psi_{z_1}(T) = - 1,\\[3pt]
		\;\psi_{z_2}(0) = 2 \left(z^0_2(0) - \widehat{z}_2\right), \qq
		\psi_{z_2}(T) = 0,\\[3pt]
		\;\widetilde{\psi}_{x}(0) =  2 \left(x^0(0) - \widehat{x}_0\right), \qq
		\;\widetilde{\psi}_{x}(T) = - 2 \left(x^0(T) - \widehat{x}_T\right).
	\end{cases}
\end{equation}
By the first equation, $\psi_{z_1} \equiv - 1,$ hence, if we set
$\widehat z_1 = 1/2,$ the transversality condition for $\psi_{z_1}$
is satisfied. \smallskip

From \eqref{samp_1_adj_sys}, we get equations for $\psi_{z_2}$:
\begin{equation}
	\begin{cases}
		\begin{aligned}
			\dot{\psi}_{z_2} &= (2t - a - b)(a - t),\quad \mbox{ on } [0,a], &
			\quad \psi_{z_2}(0) &= - 2 \widehat{z}_2, \\[3pt]
			\dot{\psi}_{z_2} &= 0 \qquad\qquad \qquad\qquad \quad \mbox{ on } [a, b],\\[3pt]
			\dot{\psi}_{z_2} &= (2t - a - b)(t - b), \quad \mbox{ on } [b,T], &
			\quad \psi_{z_2}(T) &= 0.\\
		\end{aligned}
	\end{cases}
\end{equation}\\[2pt]
Solving the initial value problems on $[0,a]$ and $[b,T],$ we get
\begin{equation}
	\begin{aligned}
		\psi_{z_2} &= - \frac{2}{3}t^3 + \frac{3 a + b}{2} t^2 - a (a + b)t
		- 2 \widehat{z}_2 \qquad \mbox{ on } [0,a],\\[3pt]
		\psi_{z_2} &= \frac{2}{3}\left(t^3 - T^3\right) -
		\frac{a + 3 b}{2} \left(t^2 - T^2\right) + b (a + b) \left(t - T\right)
		\; \mbox{ on } [b,T],\\
	\end{aligned}
\end{equation}
and, since $\psi_{z_2} $ is continuous everywhere and constant on $[a,b],$ it
should satisfy the equality $\psi_{z_2}(a - 0) = \psi_{z_2}(b + 0),$ i.e.,
\begin{multline}
	-\frac{2}{3}a^3 + \frac{3 a + b}{2} a^2 - a(a + b)a - 2 \widehat{z}_2\;
	= {} \\ {} = \frac{2}{3}\left(b^3 - T^3\right) -
	\frac{a + 3 b}{2} \left(b^2 - T^2\right) + b (a + b) \left(b - T\right).
\end{multline}
Obviously, there exists such $\widehat z_2$ that it holds.
Fix this $\widehat z_2$. \smallskip

Similarly, for $\widetilde{\psi}_x$ we get from \eqref{samp_1_adj_sys}:
\begin{equation}\label{psitil}
	\begin{cases}
		\dot{\widetilde{\psi}}_x  = (t - a)(t - b)- \dot\mu, \qquad \\[4pt]
		\widetilde{\psi}_x(0)= 2(a -\widehat{x}_0),\quad\;
		\widetilde{\psi}_x(T)= -2(T-b -\widehat{x}_T).\quad
	\end{cases}
\end{equation}
Solving the initial value problems on $[0,a]$ and $[b,T]$ with $\dot\mu=0,$
we get
$$
\begin{cases}
\begin{aligned}
\widetilde{\psi}_x &= \frac{t^3}{3} -\frac{a + b}{2} t^2 +\; ab\,t + 2(a -\widehat{x}_0)
\quad \mbox{ on } [0,a],\\[4pt]
\widetilde{\psi}_x &= \frac{t^3 - T^3}{3} - \frac{a + b}{2} (t^2 - T^2)
+ ab(t -T) - 2(T - b -\widehat{x}_T) \quad \mbox{on } [b,T].\\
\end{aligned}
\end{cases}
$$
The condition $\overline{H}_u \equiv 0,$ i.e., $\widetilde{\psi}_x \equiv 2 h u^0,$
implies $\widetilde{\psi}_x \equiv 0$ on $[a, b].$ According to \eqref{jump_cond},
$\Delta\widetilde{\psi}_x(a) \leq 0,$ hence $\widetilde{\psi}_x(a-0) \ge 0.$
If $\widetilde{\psi}_x(a-0) > 0,$ then $h<0$ in a left neighborhood of $a,$
a contradiction with $h\ge 0.$ Therefore, $\widetilde{\psi}_x(a - 0) = 0.$
Similarly, we get $\widetilde{\psi}_x(b + 0) = 0,$ i.e., $\widetilde{\psi}_x$
has no jumps at $t= a$ and $t=b.$ \smallskip

The fulfillment of the obtained equalities is equivalent to the following
linear relations on the parameters $\widehat{x}_0,\, \widehat{x}_T$:
\begin{equation}
	\begin{cases}
		\begin{aligned}
			&\frac{a^3}{3} - \frac{a + b}{2} a^2 + a^2b + 2 (a - \widehat{x}_0)=0,\\[4pt]
			&\frac{b^3 - T^3}{3} - \frac{a + b}{2} (b^2 - T^2)  ab^2
			-2 ((T-b) - \widehat{x}_T)=0.
		\end{aligned}
	\end{cases}
\end{equation}
Obviously, such $\widehat{x}_0,\, \widehat{x}_T$ do exist. Fix these values.

Finally, from \eqref{psitil} it follows that $\dot{\widetilde{\psi}}_x > 0$
on $(0,a)$ and $(b,T),$ so $\widetilde{\psi}_x < 0$ on $[0,a)$ and
$\widetilde{\psi}_x > 0$ on $(b,T],$ and then the condition
$\widetilde{\psi}_x = 2 h u^0$ implies that $h(t) > 0$ on these intervals.
Thus, for the chosen parameters of problem and for the examined trajectory,
there exists a unique collection of multipliers satisfying all the conditions
of Theorem \ref{stat} except \eqref{nonnegativity_cond}. Here, condition \eqref{psitil}
implies that $\dot{\mu} = (t - a)(t - b) < 0$ on $(a,b),$ which contradicts
the condition \eqref{nonnegativity_cond}. Thus, the last condition
does not follow from the others, and the examined trajectory does not provide
the extended weak minimality.

\section{Generalization of the Obtained Result}\label{section_12}

An important feature of problem \eqref{problem_A} is that the state constraint
has the form $x \geq 0,$ i.e., it is imposed only on one state coordinate.
Let us show how it is possible to use the above result to formulate stationarity
conditions in a more general
\begin{equation}\label{problem_C}
	\mbox{{\textbf{Problem C:}}}\quad
	\begin{cases}
		\begin{aligned}
			&\dot y = f(y, u),\; && J_C:= J\left(y(0),y(T)\right)\to\min,\\[3pt]
			&\varphi\left(u(t)\right)\leq 0,\; && \varPhi\left(y(t)\right) \geq 0.
		\end{aligned}
	\end{cases}
\end{equation}
Here  $y\in \R^{n + 1},$ $u\in \R^{m},$ the state variable $y(\cdot)$ is
absolutely continuous, and the control $u(\cdot)$ is measurable bounded
functions. We assume that the data functions $f,\,\varphi,$ and $\Phi$
are defined and twice continuously differentiable on an open subset
$\mathcal{Q}\subset\R^{n + 1 +m}.$ \ssk

As before, we suppose that the reference process $w^0= (y^0,u^0)$ is such
that the trajectory $y^0(t)$ touches the state boundary only on a segment
$[t^0_1, t^0_2],$ where $0 < t^0_1 < t^0_2 < T.$  In other words, the interval
$\Delta:= [0,T]$ is divided into three parts $\Delta_1:= [0,t^0_1],$
$\Delta_2:= [t^0_1, t^0_2],$ and $\Delta_3:= [t^0_2, T],$ such that
$\Phi(y^0(t)) > 0$ on $[0, t^0_1),$ $\Phi(y^0(t)) = 0$ on $\Delta_2,$
and $\Phi(y^0(t)) > 0$ on $(t^0_2,T].$ The control $u^0(t)$ is continuous
on $\Delta_1\,, \Delta_3\,,$ Lipschitz continuous on $\Delta_2,$
and, moreover, $\varphi_s(u^0(t))<0$ on $\Delta_2$ for all $s,$
and the landing to the state boundary and the leaving it occurs with
nonzero time derivatives:
\begin{equation}\label{y12}
	\begin{aligned}
		\dot \Phi\left(y^0(t^0_1 - 0)\right) &= \;
		\Phi'\left(y^0(t^0_1)\right)f\left(y^0(t^0_1),\, u^0(t^0_1 - 0) \right) < 0,
		\\[2pt]
		\dot \Phi\left(y^0(t^0_2 + 0)\right) &=\;
		\Phi'\left(y^0(t^0_2)\right)f\left(y^0(t^0_2),\, u^0(t^0_2 + 0) \right) > 0.
	\end{aligned}
\end{equation}
As before, we assume that the gradients $\varphi'_i(u^0(t)),\;\, i \in I(u^0(t))$
are positive independent for all $t\in \Delta_1 \cup\Delta_3\,,$
and $ \Phi'(y^0(t))f_u(y^0(t),\, u^0(t))\ne 0$ on $\Delta_2$.

\section{Reduction of Problem C to Problem A}\label{section_13}

We accept the following technical \ms

{\bf Assumption C.}\q There exist an open subset $\,\Omega \subset \R^{n+1}\,$
containing\, the curve $y^0(t),\;\, t\in [0,T],$ and twice continuously differentiable
functions $P_i: \Omega \to\R,\;\, i=1,\ldots, n,$ such that the gradients
$P_1'(y),\ldots,P_n'(y),\,\Phi'(y)$ are linearly independent at any point
$y\in \Omega,$ and, moreover, the mapping $F: \Omega \to \R^{n+1}$
defined by  \vadjust{\kern-8pt}
\begin{equation}\label{F}
	F(y) :=
	\left(
	\begin{aligned}
		P(y)\\
		\Phi(y)
	\end{aligned}
	\right) =
	\left(
	\begin{aligned}
		P_1&(y)\\
		&\vdots\\
		P_n&(y)\\
		\Phi&(y)
	\end{aligned}
	\right)
\end{equation}
is an injection. In other words, $F$ realizes a nondegenerate change
of variables in $\Omega$:
\begin{equation}\label{yTozx}
	y \mapsto (z,x),\qquad z = P(y) \in \R^n,\qquad x = \Phi(y) \in \R^1.
\end{equation}
Herewith, $det\,F'(y^0(t)) \ne 0,$ the set $Q = F(\Omega)$ is also open,
and there exists a inverse mapping $G: Q \to \Omega,$ $(z, x)\mapsto y,$
so that
\begin{equation}\label{pphi}
	G(P(y), \Phi(y)) \;=\; y \qquad \forall y \in \Omega.
\end{equation}
In what follows, we will always assume that $y,\,z,\,x$ satisfy the
following relations
$$
y =G(z,x),\qquad z=P(y),\qquad x=\Phi(y).
$$
Note that differentiation of \eqref{pphi} yields the equality
\begin{equation}\label{gzpp}
	G'_z(z,x)\, P'(y)\; +\; G'_x(z,x)\, \Phi' (y)\; =\; E_{n+1}\,,
\end{equation}
where the right hand part is the identity matrix of dimension $n+1$.
\smallskip

\begin{remark} It is sufficient to assume that $\Omega$ contains not the
	entire curve $y^0(t),$ $t\in [0,T],$ but only part of it for $t \in\Delta_2.$
	Then, by extending the definition of the function $P$ out of $\Omega,$
	one can reduce the situation to the case of $\Omega$ containing the entire
	curve $y^0(t).$ Here we do not dwell on the corresponding technical details.
	Note only that Assumption C is really satisfied in all reasonable, especially
	applied, problems with state constraints.
\end{remark}

Obviously, the dynamics of state variables $z,\,x$ obeys the system
\begin{equation}
	\dot z  = P'(y) f(y, u),\qquad  \dot x = \Phi'(y) f(y, u),
\end{equation}
therefore, problem \eqref{problem_C} in these new variables transforms
to the following problem of type \eqref{problem_A} on the same time interval
$[0,T]$:
\begin{equation}\label{pro3}
	\mbox{{\textbf{Problem D:}}}\quad
	\begin{cases}
		\begin{aligned}
			&\dot z = P'\left(G(z,x)\right)f\left(G(z,x),u\right), \\[2pt]
			&\dot x = \Phi'\left(G(z,x)\right)f\left(G(z,x),u\right),\\[2pt]
			&J_D:=J\left(G\left(z(0), x(0)\right), G\left(z(T),x(T)\right)\right) \to \min,\\[2pt]
			&\varphi_i\left(u(t)\right) \leq 0, \quad i = 1,\dots, d(\varphi),\\[2pt]
			&x(t) \geq 0.\\[2pt]
		\end{aligned}
	\end{cases}
\end{equation}

To each process $w = (y(t), u(t))$ of problem C\, one can associate a
process $\gamma = (z(t), x(t), u(t))$ of problem D, and vice versa.
Obviously, the process $w^0$ provides the extended weak minimality in
problem C if and only if the corresponding process $\gamma^0$ provides
the extended weak minimality in problem~D.

Therefore, we can use the fact that the process $\gamma^0$ satisfies
the stationarity conditions given in Theorem \ref{stat}.

\section{Stationarity Conditions for Problem C}\label{section_14}

In further transformations, we have to differentiate vector-valued and
matrix-valued functions w.r.t a vector argument. To avoid cumbersome formulas
in the coordinate form, let us accept the following notation. If $T(z)$
is any tensor of a given rank (in particular, a vector or a matrix), every
element $\theta(z)$ of which is a smooth function of $z \in\R^n,$ then its
directional derivative along a vector $\bar z \in\R^n$ will be denoted as
$T'(z)\,\bar z.$ The last one is still a tensor of the same rank and dimension,
whose elements $\theta'(z)\,\bar z = \sum_{i=1}^n \theta'_{z_i}(z)\,\bar z_i$
are the scalar directional derivatives of the corresponding elements
$\theta(z)$ along the vector $\bar z$.
\smallskip

According to Theorem \ref{stat}, if the process $\gamma^0 = (z^0(t), x^0(t), u^0(t))$
provides the extended weak minimality in problem D, then there exist a Lipschitz
continuous adjoint variable $\psi_z(t)$ $(n-$dimensional row vector) on $[0,T],$
a constant $c,$ scalar functions $\mu(t)$ and $\psi_x(t),$ Lipschitz continuous on
each interval $\Delta_i,$ $i=1,2,3,$ such that $d\mu(t) \ge 0,$ and a measurable
bounded function $h(t) \ge 0,$ which generate the Pontryagin function 
$$
{\cal H} = \Bigl(\psi_z P'\left(G(z,x)\right) + \psi_x \Phi'\left(G(z,x)\right)\Bigr) f\left(G(z,x),u\right)
$$
and the extended Pontryagin function
$$
\overline{\cal H} = \Bigl(\psi_z P'(G(z,x)) +
\psi_x \Phi'(G(z,x)\Bigr) f\left(G(z,x),u\right)\, +\, \dot \mu x - h \varphi,
$$
such that the following conditions hold:\ssk

\noindent
complementary slackness
\begin{equation}\label{comp_slack_2}
	\dot \mu(t)\, x^0(t) = 0,\; \quad h(t)\, \varphi(u^0(t)) = 0
	\quad \mbox{ a.e.\, on }\; [0,T],
\end{equation}
adjoint equations
\begin{equation}\label{eq63a}
	\begin{cases}
		\begin{aligned}
			- &\dot {\psi}_z\, \overline{z}   =  \Bigl(\psi_z P''\left(G(z^0, x^0)\right)\, +
			\psi_x \Phi''\left(G(z^0, x^0)\right)\Bigr) \\ &
			\phantom{rrrrrrrrrrrr} \times\left( G'_z(z^0, x^0) \overline{z}\right) f(G(z^0, x^0),u^0)\; +\\[2pt]
			&+ \Bigl(\psi_z P'\left(G(z^0, x^0))\right)\, + \psi_x \Phi'\left(G(z^0, x^0)\right)\Bigr) \\ &
			\phantom{rrrrrrrrrrrr} \times f'_y(G(z^0, x^0),u^0) \left( G'_z(z^0, x^0) \overline{z}\right),\\[6pt]
		\end{aligned}
	\end{cases}
\end{equation}

\begin{equation}\label{eq63b}
	\begin{cases}
		\begin{aligned}
			- &\dot {\psi}_x\, \overline{x}  =  \Bigl(\psi_z P''\left(G(z^0, x^0)\right)\, +
			\psi_x \Phi''\left(G(z^0, x^0)\right)\Bigr) \\ &
			\phantom{rrrrrrrrrrrr} \times\left( G'_x(z^0, x^0) \overline{z}\right) f(G(z^0, x^0),u^0)\; +\\[2pt]
			&+ \Bigl(\psi_z P'\left(G(z^0, x^0))\right)\, + \psi_x \Phi'\left(G(z^0, x^0)\right)\Bigr) \\ &
			\phantom{rrrrrrrrrrrr} \times f'_y(G(z^0, x^0),u^0) \left( G'_x(z^0, x^0) \overline{z}\right),\\[6pt]
		\end{aligned}
	\end{cases}
\end{equation}
(these equalities hold for any ``test'' constant vectors
$\overline{z}\in \R^n$ and $\overline{x} \in \R^1),$ \\[4pt]
transversality conditions
\begin{equation}\label{trans_2}
	\begin{aligned}
		\psi_z(0) &= J'_{y(0)}\,G'_z(z^0(0),x^0(0)),& \;\;
		\psi_z(T) &= - J'_{y(T)}\,G'_z(z^0(T),x^0(T)),\\[2pt]
		\psi_x(0) &= J'_{y(0)}\,G'_x(z^0(0),x^0(0)),& \;\;
		\psi_x(T) &= - J'_{y(T)}\,G'_x(z^0(T),x^0(T)),
	\end{aligned}
\end{equation}
jump conditions for the adjoint variable $\psi_x$
\begin{equation}\label{jump_2}
	\Delta \psi_x(t_1^0) = - \Delta \mu(t_1^0) \leq 0, \qquad
	\Delta \psi_x(t_2^0) = - \Delta \mu(t_2^0) \leq 0,
\end{equation}
the energy conservation law
\begin{equation}\label{energy_conservation_2}
	{\cal H}(z^0(t),x^0(t),u^0(t))\, =\, c,
\end{equation}
and stationatity condition w.r.t. control
\begin{equation}\label{staunew}
	\Bigl(\psi_z P'\left(G(z^0, x^0)\right) +
	\psi_x \Phi'\left(G(z^0, x^0)\right)\Bigr) f'_u\left(G(z^0, x^0)),u^0\right) -
	h \varphi_u'(u^0) = 0. \\[4pt]
\end{equation}

Now, rewrite the obtained conditions in terms of problem C. First, denote
\begin{equation}\label{psiy}
	\psi_y =\; \psi_z P'(G(z^0, x^0)) + \psi_x \Phi'(G(z^0, x^0)).
\end{equation}
This is a row vector of dimension $n+1.$ Then, since $G(z^0, x^0)= y^0,$
condition \eqref{staunew} takes the form
\begin{equation}\label{Hyt}
	\psi_y\, f'_u (y^0, u^0) - h\, \varphi'_u (u^0) = 0.
\end{equation}

Further, multiplying $\psi_y$ by a test (constant) vector $\overline{y} \in \R^{n+1},$
we get a scalar function
$$
\psi_y\,\overline{y}\;  =\; \psi_z\, P'(G)\overline{y}\,  + \psi_x\,
\Phi'(G)\overline{y}
$$
(for short, we drop the arguments of $G$ and $f$),
which time derivative is
\begin{equation}\label{eq70}
	- \dot{\psi}_y\, \overline y\; = -\dot \psi_z (P'(G)\bar y) - \dot\psi_x (\Phi'(G)\bar y)\;
	- \psi_z (P'(G))^\bullet\overline{y}\,  - \psi_x (\Phi'(G))^\bullet \overline{y},
\end{equation}
{where $(...)^\bullet$ denotes the time derivarive of the
	function in brackets.}
\smallskip

Let us write the first two terms of this expression in view of equations
\eqref{eq63a} and \eqref{eq63b} for $\bar z = P'(G)\bar y,$
$\; \bar x = \Phi'(G)\bar y$:
\begin{equation}\label{eq71}
	\begin{array}{c}
		-\dot \psi_z (P'(G)\bar y) =\;
		\Bigl(\psi_z P''(G) + \psi_x \Phi''(G)\Bigr) \Bigl(G'_z\cdot (P'(G)\bar y)\Bigr) f\;+ \\[4pt]
		\; +\Bigl(\psi_z\, P'(G) + \psi_x\, \Phi'(G)\Bigr) f'_u  \Bigl(G'_z\cdot (P'(G)\bar y)\Bigr),
	\end{array}
\end{equation}

\begin{equation}\label{eq72}
	\begin{array}{c}
		- \dot\psi_x (\Phi'(G)\bar y) =\;
		\Bigl(\psi_z P''(G) + \psi_x \Phi''(G)\Bigr) \Bigl(G'_x\cdot (\Phi'(G)\bar y)\Bigr) f\;+ \\[4pt]
		\; +\Bigl(\psi_z\, P'(G) + \psi_x\, \Phi'(G)\Bigr) f'_u  \Bigl(G'_x\cdot (\Phi'(G)\bar y)\Bigr)\;
		+ \;\dot\mu\, \bar x.
	\end{array}
\end{equation}
The other two terms of \eqref{eq70} in view of identities $\dot G = \dot y = f$
are equal to
\begin{equation}\label{eq73}
	- \psi_z (P'(G))^\bullet\overline{y}\,  - \psi_x (\Phi'(G))^\bullet \overline{y}\; =
	\; - \Bigl(\psi_z P''(G) + \psi_x \Phi''(G)\Bigr) f\, \bar y.
\end{equation}

Summing up the right parts of equalities \eqref{eq71}--\eqref{eq73},
we get
\begin{equation}\label{eq75}
	\begin{array}{c}
		- \dot{\psi}_y\, \overline{y} =\; \Bigl(\psi_z P''(G) +
		\psi_x \Phi''(G)\Bigr)\,\bar y\, f\; + \\[4pt]
		\;+\, \psi_y\,f'_y\,\bar y\; + \;\dot\mu\, \bar x\; -
		\Bigl(\psi_z P''(G) + \psi_x \Phi''(G)\Bigr) f\, \bar y.
	\end{array}
\end{equation}
Note that the matrix $\psi_z P''(G) + \psi_x \Phi''(G)$ is the second
derivative of the scalar function $\psi_z P(G) + \psi_x \Phi(G),$ hence it
is symmetric. Therefore, the first and the last terms in the right hand
part of obtained expression (which differ only in the positions of multipliers
$\overline{y}$ and $f$) cancel each other, and in view of relation
$\bar x = \Phi'(G)\bar y,$ equation \eqref{eq75} takes the form
$$
- \dot{\psi}_y\, \overline{y}\; =\; \psi_y\,f'_y\,\bar y\; +
\;\dot\mu\,\Phi'(G)\,\bar y,
$$
whence, since the test vector $\bar y \in \R^{n+1}$ is arbitrary, we get
\begin{equation}\label{cost-y}
	- \dot{\psi}_y \; =\; \psi_y\,f'_y(y,u)\; + \;\dot\mu\,\Phi'(y).
\end{equation}
If we introduce the Pontryagin function $H = \psi_y f(y, u)$ and the
extended Pontryagin function $\overline{H} =
\psi_y f(y, u) + \dot\mu \Phi(y) - h \varphi(u)$ for problem C,
then equalities \eqref{Hyt} and \eqref{cost-y} transform to
$ \overline H_u =0$ and $-\dot{\psi}_y = \overline H_y\,$ respectively.
\smallskip

According to \eqref{jump_2}, the function $\psi_y$ has jumps
at the points $t_1^0,\, t_2^0$:
\begin{equation}\label{jumps_psi_y}
	\begin{aligned}
		\Delta \psi_y(t_1^0) &=\; \Delta \psi_x(t_1^0)\, \Phi'\left(y(t_1^0)\right) =\;
		- \Delta \mu(t_1^0)\, \Phi'\left(y(t_1^0)\right), \\[4pt]
		\Delta \psi_y(t_2^0) &=\; \Delta \psi_x(t_2^0)\, \Phi'\left(y(t_2^0)\right) =\;
		- \Delta \mu(t_2^0)\, \Phi'\left(y(t_2^0)\right).
	\end{aligned}
\end{equation}
The transversality conditions for $\psi_y$ take the form
\begin{equation}\label{trans_psi_y}
	\begin{aligned}
		\psi_y(0) =\; J'_{y(0)} G'_z(0) P'\left(y(0)\right) + J'_{y(0)} G'_x(0) \Phi'\left(y(0)\right)\;
		= J'_{y(0)},\\[4pt]
		\psi_y(T) =\; - J'_{y(T)} G'_z(0) P'\left(y(0)\right) - J'_{y(T)} G'_x(T) \Phi'\left(y(T)\right)\;
		= - J'_{y(T)}.
	\end{aligned}
\end{equation}
Finally, the complementary slackness conditions and the energy conservation
law are rewritten automatically in terms of problem C.
\smallskip

Summarizing our findings, we come to the following

\begin{theorem}\label{stat_2} \quad
	Let $w^0 = (y^0(t), u^0(t)$ be an admissible process such that $\Phi(y^0(t)) = 0$
	on $\Delta^0_2:=\left[t^0_1,t^0_2\right],$ $\Phi(y^0(t)) > 0$ on
	$[0,T]\setminus \Delta^0_2,$ ${\;\varphi_i(u^0(t)) < 0}$ on $\Delta_2\,,$
	assumption \eqref{y12} holds, and let this process provide the extended
	weak minimality in problem C.\, Then there exist a constant $c,$ functions
	$\psi_y(t),$ $\mu(t)$ Lipschitz continuous on every interval
	$\Delta_i,\; i=1,2,3,$ and a measurable bounded function $h(t),$
	which generate the Pontryagin function  
	$$
	H(\psi_y,\,y, u) =\; \psi_y f(y, u),
	$$
	and the extended Pontryagin function  
	$$
	\overline{H} =\; \psi_y f(y, u) + \dot\mu\, \Phi(y) - h \varphi(u),
	$$
	such that the following conditions hold:
	\begin{enumerate}[(a)]
		\item  non-negativity conditions
		\begin{equation}\label{nonnegativity_cond_C}
			\begin{array}{c}
				\dot\mu(t)\ge 0\; \mbox{ a.e. on } \Delta^0_2, \quad     \Delta\mu(t_1^0) \ge 0,
				\q \Delta\mu(t_2^0) \ge 0, \\[6pt]
				h(t) \ge 0\q \mbox{ a.e.\, on }\, [0,T],
			\end{array}
		\end{equation}
		\item complementary slackness
		\begin{equation}\label{comp_slack_cond_C}
			\dot\mu(t)\Phi\left(y^0(t)\right) = 0, \quad\; h(t) \varphi(u^0(t)) \;
			\mbox{ a.e. on }\, [0,T]
		\end{equation}
		\item adjoint equation
		\begin{equation}\label{adjoint_eq_C}
			-\dot \psi_{y} = \; \overline H_y =\;  \psi_y f'_y(y^0, u^0) + \dot\mu\, \Phi'(y^0),
		\end{equation}
		\item transversality conditions
		\begin{equation}\label{tr-y}
			\psi_y(t_0) =\; J'_{y(0)}, \qquad \psi_y(T) =\; - J'_{y(T)},
		\end{equation}
		\item jumps conditions for the adjoint variable
		\begin{equation}\label{jump_cond_C}
			\Delta\psi_y(t_1^0) = - \Delta\mu(t_1^0)\, \Phi'\left(y^0(t_1^0)\right), \quad
			\Delta\psi_y(t_2) = - \Delta\mu(t_2)\, \Phi'\left(y^0(t_2)\right),
		\end{equation}
		\item energy conservation law
		\begin{equation}\label{energy_conservation_C}
			H(\psi_y(t),y^0(t),u^0(t)) = c,
		\end{equation}
		\item and stationarity condition w.r.t. control
		\begin{equation}\label{stat_cond_C}
			\overline{H}'_u(\psi_y(t),y^0(t),u^0(t)) = 0\quad \mbox{ a.e. on }\; [0,T].
		\end{equation}
	\end{enumerate}
\end{theorem}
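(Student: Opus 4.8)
The plan is to obtain Theorem~\ref{stat_2} as a corollary of Theorem~\ref{stat}, using the change of variables of Section~\ref{section_13} to pass from Problem~C to Problem~D and then translating the resulting conditions back into the original variables.

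First I would record the reduction. By Assumption~C the mapping $F\colon y\mapsto(P(y),\Phi(y))$ is a diffeomorphism of $\Omega$ onto $Q=F(\Omega)$ with smooth inverse $G$, so Problem~C is carried into Problem~D, which is of type~\eqref{problem_A}, and the state constraint $\Phi(y)\ge 0$ becomes $x\ge 0$. I would check that the standing assumptions needed to apply Theorem~\ref{stat} to $\gamma^0=(z^0,x^0,u^0)$ in Problem~D are inherited from those on $w^0$ in Problem~C: relation \eqref{y12} becomes exactly \eqref{x12} since $\dot x=\Phi'(y)f(y,u)=\dot\Phi$; the order-one condition $g'_u\ne 0$ on $\Delta_2$ is precisely the assumed $\Phi'(y^0)f_u(y^0,u^0)\ne 0$; and the positive independence of the active $\varphi'_i$ is untouched by the substitution. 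Since $y=G(z,x)$ is a diffeomorphism and the time reparametrization $\sigma$ plays no role in it, $w^0$ provides the extended weak minimality in Problem~C if and only if $\gamma^0$ does so in Problem~D.

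Next I would apply Theorem~\ref{stat} to $\gamma^0$, obtaining $\psi_z$ (Lipschitz on $[0,T]$), $\psi_x$ and $\mu$ (Lipschitz on each $\Delta_i$, with jumps only at $t_1^0,t_2^0$), a constant~$c$ and a measurable bounded $h\ge 0$, with $\dot\mu\ge 0$ a.e.\ on $\Delta_2$ and $\Delta\mu(t_i^0)\ge 0$, satisfying the adjoint equations in the test-vector form \eqref{eq63a}--\eqref{eq63b}, the transversality and jump conditions, the energy law, the complementary slackness relations, and $\overline H_u=0$. Then I set $\psi_y:=\psi_z P'(G(z^0,x^0))+\psi_x\Phi'(G(z^0,x^0))$, a row vector of dimension $n+1$. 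The stationarity condition w.r.t.\ $u$ passes, by the chain rule, immediately into $\psi_y f'_u(y^0,u^0)-h\varphi'_u(u^0)=0$; the complementary slackness relations and the energy conservation law are rewritten verbatim; and the transversality and jump conditions for $\psi_y$ follow from those for $\psi_z,\psi_x$ together with the identity $G'_zP'(y)+G'_x\Phi'(y)=E_{n+1}$ from \eqref{gzpp} and the continuity of $\psi_z$ (which kills $\Delta\psi_z$), giving $\psi_y(0)=J'_{y(0)}$, $\psi_y(T)=-J'_{y(T)}$ and $\Delta\psi_y(t_i^0)=-\Delta\mu(t_i^0)\Phi'(y^0(t_i^0))$.

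The heart of the proof --- and the step I expect to require the most care --- is the adjoint equation $-\dot\psi_y=\psi_y f'_y(y,u)+\dot\mu\,\Phi'(y)$. Here I would differentiate the scalar $\psi_y\bar y$ in time for an arbitrary constant test vector $\bar y\in\R^{n+1}$, obtaining four terms: into the $-\dot\psi_z(P'(G)\bar y)$ and $-\dot\psi_x(\Phi'(G)\bar y)$ contributions I substitute the Problem~D adjoint equations \eqref{eq63a}--\eqref{eq63b} with test vectors $\bar z=P'(G)\bar y$, $\bar x=\Phi'(G)\bar y$; the remaining $-\psi_z(P'(G))^\bullet\bar y$ and $-\psi_x(\Phi'(G))^\bullet\bar y$ contributions I evaluate using $\dot G=\dot y=f$. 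The terms carrying the matrix $M:=\psi_z P''(G)+\psi_x\Phi''(G)$ then combine, via \eqref{gzpp}, into $M\bar y f-Mf\bar y$; since $M$ is the second derivative of the scalar function $y\mapsto\psi_z P(y)+\psi_x\Phi(y)$ it is symmetric, so this difference vanishes, leaving $-\dot\psi_y\bar y=\psi_y f'_y\bar y+\dot\mu\,\Phi'(G)\bar y$ for every $\bar y$, which is the desired equation $-\dot\psi_y=\overline H_y$. Finally I would collect all the pieces --- the non-negativity conditions ($\dot\mu\ge0$, $\Delta\mu(t_i^0)\ge0$, $h\ge0$), complementary slackness, the adjoint equation just derived, transversality, jump conditions, the energy conservation law, and $\overline H_u=0$ --- which are exactly items (a)--(g) of the theorem. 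All the bookkeeping with second-order chain-rule tensors and the symmetry cancellation is where the real work lies; the rest is a direct translation through the change of variables $G$.
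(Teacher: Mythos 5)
Your proposal is correct and follows essentially the same route as the paper: reduction of Problem C to Problem D via the diffeomorphism $F=(P,\Phi)$ of Assumption~C, application of Theorem~\ref{stat}, the definition $\psi_y=\psi_z P'(G)+\psi_x\Phi'(G)$, and the test-vector computation in which the second-order terms involving $\psi_z P''(G)+\psi_x\Phi''(G)$ cancel by symmetry of that matrix, yielding the adjoint equation \eqref{adjoint_eq_C}. The remaining translations (transversality via \eqref{gzpp}, jumps via continuity of $\psi_z$, and the verbatim rewriting of the complementary slackness and energy conservation conditions) match the paper's Section~\ref{section_14} exactly.
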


\begin{remark}\,
	The performed transformation $y \mapsto (z,x)$ is a particular case of the
	general one-to-one change of variables $w =F(y),\; y= G(w),$ under which
	problem C transforms to the following
	\begin{equation}\label{probl_3}
		\mbox{{\textbf{Problem E:}}}\quad
		\begin{cases}
			\begin{aligned}
				&\dot w = F'(G(w))\,f(G(w),u), \\[2pt]
				&J_E:= J\left(G(w(0)), G(w(T))\right) \to \min,\\[2pt]
				&\varphi (u(t)) \leq 0, 
				\\[2pt]
				& \Phi(G(w(t)) \geq 0. 
			\end{aligned} \\[6pt]
		\end{cases}
	\end{equation}
	Clearly, the extended weak minimality at a process  $(y^0,u^0)$ in
	problem C corresponds to that at the process $(w^0 =F(y^0),\,u^0)$ in
	problem E.  The multipliers $\alpha_0,\, h,\, \mu$ in both problems are
	the same, the extended Pontryagin functions for problems C and E are,
	respectively,
	$$ \begin{array}{l}
	\overline H^C =\; \psi^C f(y,u) - h \varphi(u) + \dot\mu^C\,\Phi(y), \\[4pt]
	\overline H^E =\; \psi^E F'(G(w))\, f(G(w),u) - h \varphi(u) + \dot\mu^E\,\Phi(G(w)),
	\end{array}
	$$
	while the adjoint variables are connected by the following equality:
	$$ \psi^C(t) =\; \psi^E(t)\, F'(y^0(t)).
	$$
	The proof of this assertion is left to the reader as an excercise.
	\ssk
	
	In the case of problem D, we have $w=(z,x)$ and $F = (P,\Phi),$ hence
	$$ \psi^C(t) =\; \psi^E_z(t)\, P'(y^0(t)) + \psi^E_x(t)\, \Phi'(y^0(t)),$$
	i.e., we get exactly formula \eqref{psiy}.
\end{remark} 

\begin{remark}\,
	For simplicity, we considered problem A\, with free endpoints of the
	trajectory. If they are restricted by terminal constrains
	$$\xi_k(z(0),x(0),\,x(T), z(T)) \leq 0,\quad\; \eta_j(z(0),x(0),\,x(T), z(T)) = 0,
	$$
	then, to obtain stationarity conditions, one should replace the cost $J$
	by the endpoint Lagrange function
	$l = \alpha_0 J + \sum_k \alpha_k \xi_k + \sum_j \beta_j \eta_j$
	(with corresponding multipliers)  and then apply Theorem \ref{stat}.\,
	The same concerns problem C.
\end{remark} 

\begin{remark}
	We suppose that the state constraint in problem \eqref{problem_A} or
	\eqref{problem_C} is of first order, and the reference trajectory lands
	on the state boundary with a nonzero first time derivative, i.e., satisfies
	conditions  \eqref{x12} or \eqref{y12}, respectively.
	Perhaps, the same approach would also work in the case of higher order
	state constraints, if the reference trajectory lands on the state boundary
	with a nonzero time derivative of the corresponding order.
	Obviously, the technique would be then more complicated.
\end{remark}

\section{Conclusions}
We consider a specific class of optimal control problems with a single
state constraint of order 1\, and a specific trajectory in it. Basing on
the approach by R.V.~Gamkrelidze, consisting in differentiating the state
constraint along the boundary subarc and reducing the original problem to
a problem with mixed control-state constraints, we obtain the full system
of stationarity conditions in the form of A.Ya.~Dubovitskii and A.A.~Milyutin,
including the sign definiteness of the measure, a multiplier at the state
constraint. To obtain these conditions, we propose an approach of two-stage
varying. At the first stage, we consider only those variations, which preserve
a constant value of the state constraint along the boundary interval, and
obtain preliminary, incomplete optimality conditions. At the second stage,
we take into account the remaining variations, concentrated on the boundary
interval, and obtain the sign definiteness of the measure, thus specifying
the stationarity conditions. Two illustrative examples are given, one showing
that the condition of non-negativity of the measure density is essential
and another with nonzero atoms of the measure at the junction points.

\begin{acknowledgements}
	This research was partially supported by the Russian Foundation for Basic
	Research under grant No. 16-01-00585.\, The authors thank Nikolai Osmolovskii
	for useful discussions and the anonymous referees for valuable remarks.
\end{acknowledgements} 
\bigskip



\begin{thebibliography}{99}
	
	\bibitem{DM65}
	{Dubovitskii, A.\,Ya., Milyutin, A.\,A.:}
	\href{https://doi.org/10.1016/0041-5553(65)90148-5}{Extremum problems in the presence of restrictions.}
	{USSR Comput. Math. and Math. Phys.}
	{5(3),} {1--80} {(1965)}
	
	\bibitem{Gir}
	{Girsanov I.V.:}
	\href{http://www.springer.com/us/book/9783540058571}{Lectures on Mathematical Theory of Extremum Problems.}\,
	{Springer-Verlag Berlin, Heidelberg}
	{(1972)}
	
	
	\bibitem{IT}
	{Ioffe, A.D., Tikhomirov, V.M.:}
	{Theory of extremal problems.}
	{North-Holland Publishing Company,} {Amsterdam, New Yourk, Oxford}
	{(1974)}
	
	
	\bibitem{Pont}
	{Pontryagin, L. S., Boltyanskii, V. G., Gamkrelidze, R. V., Mishechenko, E. F.:}
	{The Mathematical Theory of Optimal Processes.}
	{John Wiley \& Sons,}
	{New York/London}
	{(1962)}
	
	\bibitem{HSV}
	{Hartl, F.\,H, Sethi, S.\,P., Vickson, R.\,G.:}
	\href{http://dx.doi.org/10.1137/1037043}{A survey of the maximum principles for optimal control problems
		with state constraints}.
	{SIAM Review}
	{37(2),}
	{181--218}
	{(1995)}
	
	
	\bibitem{AKP}
	{Arutyunov, A.\,V., Karamzin, D.\,Y., Pereira, F.\,L.:}
		{\href{http://dx.doi.org/10.1007/s10957-011-9807-5}{The Maximum Principle  for Optimal Control Problems with State Constraints
		by R.V. Gamkrelidze: Revisited}}.
	{J. Optim. Theory Appl.}
	{149(3),} {474--493} {(2011)}
	
	
	
	\bibitem{DO}
	{Dmitruk, A.\,V., Osmolovskii, N.\,P.:}
	{\href{http://dx.doi.org/10.3934/dcds.2015.35.4323}{Necessary conditions for a weak minimum in optimal control problems
		with integral equations on a variable time interval}}.
	{Discrete and Continuous Dynamical Systems}
	{35(9),} {4323--4343} {(2015)}
	
	\bibitem{DK}
	{Dmitruk, A.\,V., Kaganovich A.\,M.:}
	{\href{http://dx.doi.org/10.1016/j.sysconle.2008.05.006}{The Hybrid Maximum Principle is a consequence of Pontryagin Maximum Principle}}.
	{Systems \& Control Letters}
	{57(11),} {964--970} {(2008)}
	
	\bibitem{Den}
	{Denbow, C.\,H.:}
	A generalized form of the problem of Bolza.
	Contributions to the Calculus of Variations, 1933-1937,
	{The University of Chicago Press}, p. 449--484 {(1937)}
	
	
	\bibitem{Vol-Ost}
	{Volin, Yu.\,M., Ostrovskii, G.\,M.:}
	{Maximum principle for discontinuous systems and its
		application to problems with state constraints (In Russian).}
	{Izvestia Vuzov. Radiofzika}
	{12,} {1609--1621} {(1969)}
	
	\bibitem{AgMa}
	Augustin, D., Maurer, H.:
		{Second order sufficient conditions and sensitivity analysis for optimal
	multiprocess control problems}. Control and Cyb.,
	29, No. 1, pp. 11-31 (2000)
	
	
	\bibitem{MBKK}
	{Maurer, H., Buskens, C., Kim, J.-H.\,R.,
		Kaya, C.\,Y.:}
		{\href{http://dx.doi.org/10.1002/oca.756}{Optimization methods for the verification of second
		order sufficient conditions for bang-bang controls}}.
	{Optimal Control Applications and Methods}
	{26,} {129--156} {(2005)}
	
	
	\bibitem{ObRo}
	Oberle, H.J., Rosendahl, R.: 
	{On singular arcs in nonsmooth optimal
	control}. Control and Cybernetics, 37, no. 2, p. 429--450 (2008)
	
	
	\bibitem{DK2}
	{Dmitruk, A.\,V., Kaganovich A.\,M.:}
	{\href{http://dx.doi.org/10.1007/s10598-011-9096-8}{Maximum principle for optimal control problems with intermediate constraints}}.
	{Comput. Math. and Modeling}
	{22(2),} {180--215} {(2011)}
	
	
	\bibitem{LTJW}
	Liu, Y.,  Teo, K.L., Jennings, L.S., Wang, S.:
	{\href{http://dx.doi.org/10.1023/A:1022684730236}{On a class of optimal control problems with state jumps}}. {J. Optim. Theory Appl.},
	98, no. 1, p. 65--82 (1998)
	
	\bibitem{MO}
	{Milyutin, A.\,A, Osmolovskii, N.\,P.:}
	{Calculus of variations and optimal control.}
	{American Mathematical Society,} {Providence}  {(1998)}.
	
	
	\bibitem{MDO}
	{Milyutin, A.\,A., Dmitruk, A.\,V, Osmolovskii N.\,P.:}
	{Maximum principle in optimal control (Princip maksimuma v optimal'nom
		upravlenii, in Russian).}
	{Lomonosov Moscow State University, Faculty of Mathematics and Mechanics,}
	{Moscow}
	{(2004)}, available at
	\href{http://www.milyutin.ru/book eng.html}{www.milyutin.ru}
	
	\bibitem{SIC}
	{Dmitruk, A.\,V., Osmolovskii, N.\,P.:}
	{\href{http://dx.doi.org/10.1137/130921465}{Necessary conditions for a weak minimum in optimal control problems
		with integral equations subject to state and mixed constraints}.}
	{SIAM J. on Control and Optimization}
	{52(6),}
	{3437--3462}
	{(2014)}	
	
	
	\bibitem{ADMC}
	{Afanasyev, A.P., Dikusar, V.V., Milyutin, A.A., Chukanov, S.V.:}
	{Necessary condition in optimal control (Neobchodimoye uslociye
		v optimal'nom upravlenii, in Russian).}
	{Nauka,} {Moscow} {(1990)}, available at
	\href{http://www.milyutin.ru/book eng.html}{www.milyutin.ru}
	
	
	\bibitem{M77}
	{Maurer, H.:}{\href{http://dx.doi.org/10.1137/0315023}{On optimal control problems with bounded state variables
	and control appearing linearly}}.
	SIAM J. Control Optim., 15, no. 3, p. 345-362 (1977)
	
	
	
	\bibitem{BdV}
	{Bonnans, J.F., de la Vega, C.:} 
	{\href{http://dx.doi.org/10.1007/s11228-010-0154-8}{Optimal Control of State Constrained
	Integral Equations}}. Set-Valued Analysis, 18, p. 307-326 (2010)
	
	
	\bibitem{Pin-Sch}
	{de Pinho, M.R., Shvartsman, I.:} 
	{\href{http://dx.doi.org/10.3934/dcds.2011.29.505}{Lipschitz continuity of optimal control
	and Lagrange multipliers in a problem with mixed and pure state constraints}}.
	Discrete Continuous Dynamical Systems, Ser. A, 29, no. 2, 505--522 (2011)
	
	
	\bibitem{A-2016}
	{Arutyunov, A.\,V., Karamzin, D.\,Yu., Pereira, F.:}
	{\href{http://dx.doi.org/10.1134/S0081543816020036}{Conditions for the Absence of Jumps of the Solution to the Adjoint System
		of the Maximum Principle for Optimal Control Problems with State Constraints.}}
	{Proc. of the Steklov Institute of Mathematics}
	{292(1),} {27--35} {(2016)}
	
	
	
	
\end{thebibliography}
\end{document}